\documentclass[11pt,a4paper]{article}
\usepackage{a4wide}
\usepackage[utf8]{inputenc}
\usepackage{latexsym}
\usepackage{amsmath}
\usepackage{amsthm}
\usepackage{amssymb,enumerate}
\usepackage[usenames]{color}
\usepackage{graphicx}
\usepackage{fancybox}
\usepackage[normalem]{ulem}
\usepackage{hyperref}
\usepackage{pgfplots}
\pgfplotsset{compat=1.15}
\theoremstyle{plain}
\newtheorem{proposition}{Proposition}[section]
\newtheorem{theorem}[proposition]{Theorem}
\newtheorem{lemma}[proposition]{Lemma}

\theoremstyle{definition}
\newtheorem{definition}[proposition]{Definition}

\newtheorem{remark}[proposition]{Remark}

\usepackage{geometry}
\usepackage{subeqnarray}
\usepackage{enumerate}
\usepackage{color}

\makeatletter
\DeclareRobustCommand\widecheck[1]{{\mathpalette\@widecheck{#1}}}
\def\@widecheck#1#2{%
	\setbox\z@\hbox{\m@th$#1#2$}%
	\setbox\tw@\hbox{\m@th$#1%
		\widehat{%
			\vrule\@width\z@\@height\ht\z@
			\vrule\@height\z@\@width\wd\z@}$}%
	\dp\tw@-\ht\z@
	\@tempdima\ht\z@ \advance\@tempdima2\ht\tw@ \divide\@tempdima\thr@@
	\setbox\tw@\hbox{%
		 \raise\@tempdima\hbox{\scalebox{1}[-1]{\lower\@tempdima\box
				\tw@}}}%
	{\ooalign{\box\tw@ \cr \box\z@}}}
\makeatother

\renewcommand{\widecheck}[1]{\widecheck{#1}}
\usepackage{bbm}

\newcommand{\RR}{\mathbb{R}}
\newcommand{\CC}{\mathbb{C}}
\newcommand{\NN}{\mathbb{N}}

\newcommand{\M}{\mathbf{M}}
\newcommand{\m}{\boldsymbol{m}}
\newcommand{\LL}{\mathbf{L}}

\newcommand{\G}{\mathbf{G}}
\renewcommand{\Re}{\text{Re}}
\let\on=\operatorname

\newenvironment{demo}[1]{\par\smallskip\noindent{\bf #1.}}{\par\smallskip}

\makeatletter
\@namedef{subjclassname@2020}{%
	\textup{2020} Mathematics Subject Classification}
\makeatother

\def\a{\alpha}
\def\b{\beta}

\definecolor{azulosc}{rgb}{0.2,0.1,0.7}
\definecolor{granate}{rgb}{0.6,0,0.3} 

\definecolor{verdeosc}{RGB}{46, 139, 87} 
\definecolor{rojo}{RGB}{219,0,0}                

\makeatletter
\DeclareRobustCommand\widecheck[1]{{\mathpalette\@widecheck{#1}}}
\def\@widecheck#1#2{%
    \setbox\z@\hbox{\m@th$#1#2$}%
    \setbox\tw@\hbox{\m@th$#1%
       \widehat{%
          \vrule\@width\z@\@height\ht\z@
          \vrule\@height\z@\@width\wd\z@}$}%
    \dp\tw@-\ht\z@
    \@tempdima\ht\z@ \advance\@tempdima2\ht\tw@ \divide\@tempdima\thr@@
    \setbox\tw@\hbox{%
       \raise\@tempdima\hbox{\scalebox{1}[-1]{\lower\@tempdima\box
\tw@}}}%
    {\ooalign{\box\tw@ \cr \box\z@}}}
\makeatother

\begin{document}

\title{Stability under product and composition for uniform Carleman asymptotic expansions}
\author{Javier Jiménez-Garrido \and Ignacio Miguel-Cantero \and Javier Sanz \and Gerhard Schindl}

\maketitle

\begin{abstract}
We study the stability under point-wise product and under composition in Carleman classes of holomorphic functions, defined on sectors of the Riemann surface of the logarithm, and admitting a uniform asymptotic expansion with remainders controlled by a given sequence of positive real numbers $\M$. On the one hand, the well-known conditions of algebrability and Fa\`a di Bruno, imposed on the sequence $\M$, ensure the desired stability with respect to each operation in both the Roumieu and the Beurling settings. On the other hand, these conditions turn out to be necessary for the corresponding stability in the Roumieu case as long as the existence of suitable characteristic functions, in a precise sense, is guaranteed within the class. The construction of such functions rests on classical results of B. Rodríguez-Salinas, and is given in detail. Our results are inspired by, and thoroughly generalize, several partial statements by G.~Auberson and G.~Mennessier for Gevrey classes of order 1. 
\end{abstract}

\par\medskip

\noindent Key words: Uniform asymptotic expansion; algebras of functions; stability properties.
\par
\medskip
\noindent 2020 Mathematics Subject Classification Codes: Primary 30E15, 30H50; secondary  46J15.

\section{Introduction}

We denote by $\mathcal{\widetilde{A}}^{u}_{\{\M\}}(S)$ the ultraholomorphic Roumieu-type classes of functions with uniform asymptotic expansion, in a sector $S$ of the Riemann surface of the logarithm, whose remainders are bounded in terms of a weight sequence $\M = (M_j)_{j\in\mathbb{N}_0} \in \RR_{>0}^{\NN_0}$. This paper focuses on characterizing the stability of these classes under two fundamental operations: point-wise multiplication and composition. Our primary objective is to demonstrate that the stability properties of these classes can be characterized in terms of growth conditions on the defining sequence $\M$. For the problem of stability under point-wise multiplication, we show that the condition \hyperlink{s-alg}{$(\on{alg})$}, namely
$$\exists\;C\ge 1\;\forall\;j,k\in\NN_0:\;\;\;M_j M_k\le C^{j+k}M_{j+k},
$$
is sufficient to ensure that the class constitutes an algebra. Similarly, for the more complex problem of stability under composition, the \hyperlink{s-FdB}{$(\on{FdB})$} condition,
$$\exists\;C\ge 1\;\exists\;h\ge 1\;\forall\;k\in\NN \;\forall\;j_1,\dots,j_\ell\in\NN, \sum_{i=1}^{\ell}j_i=k:\;\;\;M_\ell\cdot M_{j_1}\cdots M_{j_{\ell}}\le Ch^kM_k,$$
serves as a sufficient requirement for closure. We mention that both statements apply to the Beurling-type classes as well.

The most challenging aspect of this work lies in proving that these sufficient conditions are, in fact, necessary in the Roumieu case. To achieve this characterization, we must ensure the existence of characteristic functions within the class. Following the ideas of B. Rodríguez-Salinas \cite{salinas62}, we define a function $f \in \widetilde{\mathcal{A}}^u_{\{\LL\}} (S)$ to be characteristic if its membership in a smaller class $\widetilde{\mathcal{A}}^u_{\{\M\}} (S) \subseteq \widetilde{\mathcal{A}}^u_{\{\LL\}} (S)$ implies the equality of the two spaces. As Theorem~\ref{optthm1} shows, this property is a consequence of the fact that the coefficients $(|c^f_j|)_{j\in\NN_0}$ of its asymptotic expansion are equivalent to the defining sequence~$\LL$.

The construction of these functions is achieved through the characteristic transform $\mathcal{T}_{\M}$, which allows us to modify some basic functions while maintaining precise control over the corresponding coefficients. By applying $\mathcal{T}_{\M}$ to these basic functions, we produce elements in $\widetilde{\mathcal{A}}^u_{\{\M\}} (S_{\alpha})$ whose coefficients $(|c_j^f|)_{j\in\NN_0}$ are equivalent to $\M$, where $S_{\alpha}$ stands for the unbounded sector bisected by the positive real line with opening $\pi\a$ in the Riemann surface of the logarithm. Furthermore, these coefficients are real, and we maintain precise control over their signs, which will be crucial in our reasoning. It should be emphasized that the requirements for the existence of characteristic functions are not uniform, varying significantly between sectors of small and large opening.

It is important to note that the proofs of both sufficiency and necessity rely on the combinatorial formulas for asymptotic expansions established by G. Auberson and G. Mennessier \cite{AubersonMennessier81}, which allow for the precise tracking of remainder terms and coefficient dependencies.

The stability properties of these classes have been a subject of extensive study in several related functional settings. In the ultradifferentiable context, A. Rainer and the fourth author \cite{compositionpaper, characterizationstabilitypaper} provided a complete characterization of these properties. In those works, the condition of derivation closedness, denoted \hyperlink{dc}{$(\on{dc})$}, is generally required for stability under composition; later, J. Pöschel \cite{Poschel} obtained results, in the weight sequences case, under the assumption of log-convexity \hyperlink{lc}{$(\on{lc})$} without requiring \hyperlink{dc}{$(\on{dc})$} (although some technical issue is to be fixed in his arguments regarding the composition of characteristic functions). It is worth noting that in the ultradifferentiable setting, the construction of characteristic functions is notably simpler than in the ultraholomorphic case. 

In the ultraholomorphic setting, earlier results often focused on specific weight sequences rather than general characterizations. For instance, G. Auberson and G. Mennessier \cite{AubersonMennessier81} proved the stability of Gevrey classes of order $1$ (corresponding to $M_j = j!$). Their result is not a characterization in the sense presented here, as Gevrey sequences inherently satisfy both \hyperlink{s-alg}{$(\on{alg})$} and \hyperlink{s-FdB}{$(\on{FdB})$} conditions. Furthermore, their technical approach to composition relied on specific combinatorial properties of factorials rather than the more general \hyperlink{s-FdB}{$(\on{FdB})$} condition. For the multivariate case of composition of asymptotic expansions, we refer to the work of J. Mozo \cite{MozoPhd}, whose results ensure stability under composition for the Gevrey classes of positive order $s$ (corresponding to $M_j = j!^s$).

Finally, in a previous work by the authors \cite{ultraholomstab}, characterization results were established for ultraholomorphic classes defined by uniform bounds on the derivatives. While those classes are closely related to the ones considered here, the arguments are fundamentally different, requiring now the construction of distinct characteristic functions and specialized technical machinery to handle the uniform nature of the asymptotic expansions.

It should be highlighted that the aforementioned results by A. Rainer and the fourth author were established for ultradifferentiable classes defined in terms of a weight matrix. This approach provides a unified framework that generalizes both the weight sequence and the weight function settings. It is expected that the characterization results obtained in this paper can be extended to ultraholomorphic classes defined in terms of weight matrices, considered in \cite{sectorialextensions, sectorialextensions1}. This generalization is currently part of an ongoing work in progress.

The structure of this work is organized as follows. We begin with a review of weight sequences and their properties, followed by the definition of the ultraholomorphic classes and the construction of the characteristic functions mentioned above. In Section~\ref{sec.formula}, we present the auxiliary formulas for the expansion of products and compositions. These results lead to Section~\ref{sec.alg}, where we characterize the closure under point-wise multiplication via the \hyperlink{s-alg}{$(\on{alg})$} condition. Finally, we address the closure under composition, establishing the equivalence between this stability property and the \hyperlink{s-FdB}{$(\on{FdB})$} condition.

\section{Weight sequences}\label{sect.WeightCondit}


We write $\NN_0:=\{0,1,2,\dots\}$ and $\NN:=\{1,2,3,\dots\}$. In what follows, we always denote by $\M=(M_j)_j\in\RR_{>0}^{\NN_0}$ a sequence with $M_0=1$.
We also use 
the sequence of quotients $\m=(m_j)_j$ defined by $m_j:=M_{j+1}/M_{j}$, $j\in \NN_0$.

$\M$ is said to be {\itshape log-convex}, (for short, \hypertarget{lc}{$(\text{lc})$}) if
$$\forall\;j\in\NN:\;M_j^2\le M_{j-1} M_{j+1},$$
equivalently if $\m$ is nondecreasing. If $\M$ is log-convex, 
then 
$j\mapsto(M_j)^{1/j}$ is nondecreasing and $(M_j)^{1/j}\le m_{j-1}$ for all $j\in\NN$. Finally,
\begin{equation}\label{eq.alg.from.lc}
M_j M_k\le M_{j+k},\quad j,k\in\NN_0.
\end{equation}

We say that a sequence $\M$ is a \emph{weight sequence} if it is \hyperlink{lc}{$(\text{lc})$} and $\lim_{j\to\infty} m_j =\infty$.

We shall use the following conditions on  sequences $\M$:

\begin{itemize}
	\item[\hypertarget{s-alg}{$(\on{alg})$}] $\M$ {is} {\itshape algebraic,} if
$$\exists\;C\ge 1\;\forall\;j,k\in\NN_0:\;\;\;M_j M_k\le C^{j+k}M_{j+k}.$$

\item[\hypertarget{s-FdB}{$(\on{FdB})$}] $\M$ has the {\itshape Fa\`{a}-di-Bruno property}, if
$$\exists\;C\ge 1\;\exists\;h\ge 1\;\forall\;k\in\NN \;\forall\;j_1,\dots,j_\ell\in\NN, \sum_{i=1}^{\ell}j_i=k:\;\;\;M_\ell\cdot M_{j_1}\cdots M_{j_{\ell}}\le Ch^kM_k.$$
\end{itemize}

\begin{remark}\label{newcondrem}
Each of these two conditions holds when $\mathbf{M}$ is log-convex (and $M_0=1$). Indeed, from~\eqref{eq.alg.from.lc} we have \hyperlink{s-alg}{$(\on{alg})$} with $C=1$, and \cite[Lemma 2.2 (1)]{compositionpaper} shows how to obtain \hyperlink{s-FdB}{$(\on{FdB})$} from \hyperlink{lc}{$(\on{lc})$}.
\end{remark}

For $a\in\RR$ we set
$$\G^a:=(j!^a)_{j\in\NN_0},\hspace{15pt}\overline{\G}^a:=(j^{ja})_{j\in\NN_0},$$
i.e. for $a>0$ the sequence $\G^a$ is the Gevrey-sequence of index $a$. Clearly $ \G^a$ and $\overline{\G}^a $ are weight sequences for any $a>0$ (by the convention $0^0:=1$).


$\M$ satisfies {\itshape derivation closedness}, denoted by \hypertarget{dc}{$(\text{dc})$}, if
$$\exists\;D\ge 1\;\forall\;j\in\NN_0:\;M_{j+1}\le D^{j+1} M_j\Longleftrightarrow m_{j}\le D^{j+1}.$$
In \cite{Komatsu73} this is condition $(M.2)'$. 

Let $\M,\LL\in\RR_{>0}^{\NN_0}$ be given {with arbitrary $M_0,L_0>0$}, we write $\M\hypertarget{preceq}{\subset}\LL$ if 
$$\sup_{j\in\NN}\left(M_j/L_j\right)^{1/j}<+\infty
$$ 
or, equivalently, if there exist $A,B>0$ such that $M_j\le AB^jL_j$ for every $j\in\NN_0$. We say $\M$ and $\LL$ are {\itshape equivalent}, denoted by $\M\hypertarget{approx}{\approx}\LL$, if $\M\hyperlink{preceq}{\subset}\LL$ and $\LL\hyperlink{preceq}{\subset}\M$.
Note that, in case $M_0=L_0=1$, equivalence amounts to $B^jM_j\le L_j\le C^jM_j$ for every $j\in\NN_0$ and suitable $B,C>0$.


Finally{,} we recall some useful elementary estimates,
\begin{equation}\label{Stirling}
	\forall\;j\in\NN:\;\;\;\frac{j^j}{e^j}\le j!\le j^j,
\end{equation}
which immediately imply that  
%
 $\G^a\hyperlink{approx}{\approx}\overline{\G}^a$ for any $a\in\RR$.

\section{Uniform asymptotic expansion classes}\label{sect.AsymptoticexpClass}

We introduce now the crucial classes under consideration in this paper, i.e., classes of functions that admit a uniform asymptotic expansion at the vertex of the sector where they are defined. We define spaces of both Roumieu and Beurling type, analogously as it was done for classes weighting the derivatives of the functions under consideration; for the ultraholomorphic setting see~\cite{ultraholomstab}, and~\cite{compositionpaper} for ultradifferentiable classes.

Recall that $\mathcal{R}$ stands for the Riemann surface of the logarithm. 
We will work with sectors in $\mathcal{R}$ of the form
$$
\{z\in\mathcal{R}\colon \beta<\arg(z)<\delta,\ 0<|z|<R\},
$$
where $\beta,\delta\in\RR$ with $\beta<\delta$, and $R\in(0,\infty]$. They can be seen as part of the complex plane whenever $\delta-\beta\le 2\pi$. They are called bounded if $R\in(0,\infty)$, unbounded otherwise.

For $\a>0$, we consider unbounded sectors bisected by direction 0 and opening $\a\pi$,
$$S_{\a}:=\{z\in\mathcal{R}:|\arg(z)|<\frac{\a\,\pi}{2}\}.$$
Let $T$ and $S$ be sectors in $\mathcal{R}$ with vertex at $0$. We say that $T$ is a \emph{subsector} of $S$ whenever $T\subset S$, and $T$ is a \emph{proper subsector} of  $S$ if $\overline{T}\subset S$ (where the closure of $T$ is taken in $\mathcal{R}$, and so the vertex of the sector is not under consideration).

%
%

We start by recalling the concept of uniform asymptotic expansion.

Let $\M$ be a sequence, $S\subseteq\mathcal{R}$ a sector and $h>0$. We define
$\mathcal{\widetilde{A}}^{u}_{\M,h}(S)$ as the space of $f\in\mathcal{H}(S)$ for which there exists a formal complex power series $\sum_{k=0}^{\infty}c_kz^k$ such that 
\[
\left\|f\right\|_{\M,h,\overset{\sim}{u}}:=\sup_{z\in S,j\in\NN_{0}}\frac{|f(z)-\sum_{k=0}^{j-1}c_kz^k|}{h^{j}M_{j}|z|^j}<+\infty.
\]
$(\mathcal{\widetilde{A}}^{u}_{\M,h}(S),\|\cdot\|_{\M,h,\overset{\sim}{u}})$ is a Banach space and alternatively we write $f\sim^u_{\M,h} \sum_{j=0}^\infty c_j z^j$ if $f\in\mathcal{\widetilde{A}}^{u}_{\M,h}(S)$ and say that $f$ admits $\sum_{k=0}^{\infty}c_kz^k$ as its \emph{uniform $\M$-asymptotic expansion of type $1/h$.} Next we put
\begin{equation*}
		 \mathcal{\widetilde{A}}^{u}_{\{\M\}}(S):=\bigcup_{h>0}\mathcal{\widetilde{A}}^{u}_{\M,h}(S),
\end{equation*}
and so $\widetilde{\mathcal{A}}^u_{\{\M\}}(S)$ stands for the $(LB)$ space of functions admitting a uniform $\{\M\}$-asymptotic expansion (of Roumieu type) in $S$. Occasionally, we write $f\sim^u_{\{\M\}} \sum_{j=0}^\infty c_j z^j$ if $f\in\widetilde{\mathcal{A}}^u_{\{\M\}}(S)$. Moreover, we can consider the space of Beurling-type, denoted by $\widetilde{\mathcal{A}}^u_{(\M)}(S)$, and defined as
$$
\widetilde{\mathcal{A}}^u_{(\M)}(S):=\bigcap_{h>0}\widetilde{\mathcal{A}}^u_{\M,h}(S),
$$
which becomes a Fr\'echet space when endowed with the topology generated by the family of seminorms $(\left\|\cdot\right\|_{\M,h,\overset{\sim}{u}})_{h>0}$. In this case, we write $f\sim^u_{(\M)} \sum_{j=0}^\infty c_j z^j$ if $f\in\widetilde{\mathcal{A}}^u_{(\M)}(S)$.

Let us fix some notation. If $f\in \widetilde{\mathcal{A}}^u_{\{\M\}} (S)$  with $f\sim^u_{\{\M\}} \sum_{j=0}^\infty c_j z^j$, then we denote the remainder
\begin{equation*}
	r_{f}(z,n):= f(z)-\sum_{j=0}^{n-1} c_j z^j,\;\;\;z\in S,\;n\in\NN_0,
\end{equation*}
and also observe that
\begin{equation*}
	\forall\;z\in S\;\forall\;n\in\NN_0:\;\;\; \frac{zr_f(z,n+1)}{z^{n+1}}=\frac{r_f(z,n)}{z^{n}}-c_n.
\end{equation*}
If $f\in\widetilde{\mathcal{A}}^u_{\{\M\}} (S)$, then $z\mapsto z^{-n}r_f(z,n)$ is bounded in $S$ for each $n\in\NN_0$ fixed. This implies
\begin{equation}\label{eq.coeff.asymp.expan.lim.remainder}
    \forall\;n\in\NN_0:\;\;\;\lim_{z\in S,z\rightarrow 0} z^{-n}r_f(z,n)=c_n.
\end{equation}
Moreover, if $f\sim^u_{\M,h} \sum_{j=0}^\infty c_j z^j$ on $S$, then by \eqref{eq.coeff.asymp.expan.lim.remainder} we have
\begin{equation}\label{cjestimation}
	\exists\;A>0\;\forall\;n\in\NN_0:\;\;\;|c_n|\le Ah^nM_n.
\end{equation}

It is also a well-known fact, steming from Cauchy's integral formula for the derivatives, that for every $n\in\NN_0$ and every proper subsector $T$ of $S$ one has
\begin{equation}\label{eq.limit.derivatives.coeff}
\lim_{\substack{z\to 0\\z\in T}}\frac{f^{(n)}(z)}{n!}=c_n.    
\end{equation}

\begin{remark}
%
	When a statement is valid for both Roumieu and Beurling classes, we will use the notation $\widetilde{\mathcal{A}}^u_{[\M]}(S)$ and so on (substituting every square bracket by either of them, curly brackets or parentheses, but the same all through the statement). Moreover, if $\M\approx\LL$ the corresponding classes coincide (as locally convex vector spaces), i.e. $\widetilde{\mathcal{A}}^u_{[\M]}(S)=\widetilde{\mathcal{A}}^u_{[\LL]}(S)$.
	
\end{remark}	
\section{Characteristic functions for classes with Roumieu uniform asymptotics}\label{sect.CharactFunctions}

We introduce the concept of characteristic functions for the classes under consideration.

\begin{definition}\label{chardef}
	Let $\LL\in\RR_{>0}^{\NN_0}$ and $S$ be a given sector.
A function $f\in \widetilde{\mathcal{A}}^u_{\{\LL\}} (S)$ is said to be \textit{characteristic} in the class $\widetilde{\mathcal{A}}^u_{\{\LL\}} (S)$ if, whenever $f\in \widetilde{\mathcal{A}}^u_{\{\M\}} (S)\subseteq \widetilde{\mathcal{A}}^u_{\{\LL\}} (S)$ for some $\M\in\RR_{>0}^{\NN_0}$, we have that $\widetilde{\mathcal{A}}^u_{\{\M\}} (S)= \widetilde{\mathcal{A}}^u_{\{\LL\}} (S)$.
\end{definition}



Let $f\in \widetilde{\mathcal{A}}^u_{\{\LL\}} (S)$. We consider the sequence defined by
\begin{equation*}
	\widetilde{C}_n(f):=\sup_{z\in S}|z^{-n}r_f(z,n)|, \qquad n\in\NN_0.
\end{equation*}

We deduce that a function is characteristic of the class if the sequence of moduli of the coefficients of the associated asymptotic expansion (resp. the sequence $(\widetilde{C}_j(f))_{j\in\NN_0}$) is equivalent to the sequence defining the class. More precisely,

\begin{theorem}\label{optthm1}
	Let $\LL\in\RR_{>0}^{\NN_0}$, $S$ be a given sector and $f\in \widetilde{\mathcal{A}}^u_{\{\LL\}} (S)$ with $f\sim^u_{\{\LL\}} \sum_{j=0}^\infty c_j z^j$. Then, each of the following  conditions implies the next one:
	\begin{enumerate}
		\item[$(1)$] The sequence $(|c_j|)_{j\in\NN_0}$ is equivalent to $\LL$.
		\item[$(2)$] The sequence $(\widetilde{C}_j(f))_{j\in\NN_0}$ is equivalent to $\LL$.
		\item[$(3)$] $f$ is characteristic in the class $\widetilde{\mathcal{A}}^u_{\{\LL\}} (S)$.
	\end{enumerate}
\end{theorem}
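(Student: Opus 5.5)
We prove the two implications $(1)\Rightarrow(2)$ and $(2)\Rightarrow(3)$ separately. Throughout we use the normalizations $\widetilde{C}_n(f)$ and $c_n$ as defined above, and the fact that $f\in\widetilde{\mathcal{A}}^u_{\LL,h}(S)$ for some $h>0$.

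For $(1)\Rightarrow(2)$ we need two-sided bounds on $\widetilde{C}_n(f)$. The upper bound is immediate from the definition of the norm: if $f\in\widetilde{\mathcal{A}}^u_{\LL,h}(S)$, then
$$\widetilde{C}_n(f)\le \|f\|_{\LL,h,\overset{\sim}{u}}\,h^nL_n,$$
so $(\widetilde{C}_n(f))_n\subset\LL$. For the lower bound we use \eqref{eq.coeff.asymp.expan.lim.remainder}: since $z^{-n}r_f(z,n)\to c_n$ as $z\to0$ in $S$, we get $|c_n|\le\widetilde{C}_n(f)$ for every $n$. Hence $\LL\subset(|c_n|)_n\subset(\widetilde{C}_n(f))_n$, where the first inclusion is assumption $(1)$. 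This gives the equivalence in $(2)$. One caveat is that the relation $\subset$ as defined takes a supremum of $(M_j/L_j)^{1/j}$ over $j\in\NN$. If some $\widetilde{C}_n(f)$ or $c_n$ vanishes, the quotients are undefined, but assumption $(1)$ already forces $|c_n|>0$, and then $\widetilde{C}_n(f)\ge|c_n|>0$.

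For $(2)\Rightarrow(3)$, suppose $f\in\widetilde{\mathcal{A}}^u_{\{\M\}}(S)\subseteq\widetilde{\mathcal{A}}^u_{\{\LL\}}(S)$. By uniqueness of the asymptotic expansion (the coefficients are determined by \eqref{eq.coeff.asymp.expan.lim.remainder}), the $\M$-expansion of $f$ has the same coefficients $c_j$, and therefore the same remainders $r_f(z,n)$. Membership in $\widetilde{\mathcal{A}}^u_{\M,k}(S)$ for some $k>0$ then gives
$$\widetilde{C}_n(f)\le A k^n M_n \quad\text{for all } n.$$
Combining this with $(2)$ yields $L_n\le A'B^nM_n$, that is, $\LL\subset\M$. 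From $\LL\subset\M$ we get $\widetilde{\mathcal{A}}^u_{\LL,h}(S)\subseteq\widetilde{\mathcal{A}}^u_{\M,Bh}(S)$ directly from the norms: $h^jL_j\le A'(Bh)^jM_j$, so $\|g\|_{\M,Bh,\overset{\sim}{u}}\le A'\|g\|_{\LL,h,\overset{\sim}{u}}$. Hence $\widetilde{\mathcal{A}}^u_{\{\LL\}}(S)\subseteq\widetilde{\mathcal{A}}^u_{\{\M\}}(S)$, and together with the assumed reverse inclusion the two spaces coincide.

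There is no real obstacle here. The only point needing care is the identification of the coefficients of $f$ in the two classes, which follows from \eqref{eq.coeff.asymp.expan.lim.remainder}, together with the bookkeeping of the constants $A,B$ when $L_0,M_0$ are not normalized. The case $n=0$ is handled by $\widetilde{C}_0(f)=\sup_S|f|$, which is finite.
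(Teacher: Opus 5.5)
Your proof is correct and follows the paper's argument. The upper bound on $\widetilde{C}_n(f)$ comes from the norm, the lower bound $|c_n|\le\widetilde{C}_n(f)$ from \eqref{eq.coeff.asymp.expan.lim.remainder}, and for $(2)\Rightarrow(3)$ you chain $L_n\lesssim\widetilde{C}_n(f)\lesssim M_n$ to get $\LL\subset\M$ and hence $\widetilde{\mathcal{A}}^u_{\{\LL\}}(S)\subseteq\widetilde{\mathcal{A}}^u_{\{\M\}}(S)$; you also make explicit the uniqueness of the coefficients and the norm inclusion $\widetilde{\mathcal{A}}^u_{\LL,h}(S)\subseteq\widetilde{\mathcal{A}}^u_{\M,Bh}(S)$, which the paper leaves implicit.
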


\demo{Proof}
(1) $\Rightarrow$ (2) As $f\in \widetilde{\mathcal{A}}^u_{\{\LL\}} (S)$, there exist $A,B>0$ such that $\widetilde{C}_n(f)\le AB^nL_n$ for every $n\in\NN_0$. On the other hand, it is clear that $\widetilde{C}_n(f)\geq |c_n|$, and the hypothesis allows us to conclude the other estimate.

(2)  $\Rightarrow$ (3) By assumption, there exist $A,B>0$ such that $L_n\le A{B}^n\widetilde{C}_n(f)$ for every $n\in\NN_0$. If for some $\M=(M_n)_{n\in\NN_0}\in\RR_{>0}^{\NN_0}$ we have $f\in \widetilde{\mathcal{A}}^u_{\{\M\}} (S)\subseteq \widetilde{\mathcal{A}}^u_{\{\LL\}} (S)$, there exist $C,D>0$ such that $\widetilde{C}_n(f)\le C {D}^nM_n$ for every $n\in\NN_0$. The two deduced inequalities show that $L_n\le{ AC(BD)^n}M_n$ for every $n\in\NN_0$, what easily implies that $\widetilde{\mathcal{A}}^u_{\{\LL\}} (S)\subseteq \widetilde{\mathcal{A}}^u_{\{\M\}} (S)$, and we are done.
\qed\enddemo

%
%
%

\subsection{Basic functions}\label{ssect.BasicFct}

Recall the notations {$\G^s:=(j!^s)_{j\in\NN_0}$ and }$\overline{\G}^s:=(j^{js})_{j\in\NN_0}$, $s\in\RR$, and that $\overline{\G}^s\hyperlink{approx}{\approx}\G^s$, see \eqref{Stirling}. We are going to construct characteristic functions in the uniform asymptotic classes associated with $\overline{\G}^s$, where $s$ depends on the opening of the sector.


The \textit{{two-parametric} Mittag-Leffler function} is defined for all {complex parameters $A, B$ with $\Re(A)>0$} by
$$E_{A,B}(z):=\sum_{j=0}^\infty \frac{z^j}{\Gamma (Aj+B)},\quad z\in\CC,$$
where $\Gamma$ denotes the Gamma function.
For the construction of characteristic functions in sectors $S_\alpha$ for $\alpha\in(0,1]$ we will take $A=2-\alpha$ and $B=4-\alpha$ and we set
\begin{equation*}
	\widetilde{E}_{\alpha}(z):=E_{2-\alpha, 4-\alpha}(-z){=\sum_{j=0}^\infty \frac{(-1)^jz^j}{\Gamma ((2-\alpha)(j+1)+2)},\quad z\in\CC}.
\end{equation*}

We recall the following statement.

\begin{theorem}\label{Ealphathm} (\cite[Thm. 5, Thm. 20]{salinas62})
	Let $\alpha\in(0,2)$, then
	\begin{equation*}
		\exists\;C\ge 1\;\forall\;z\in S_{\alpha}\;\forall\;n\in\NN_0:\;\;\;\left| \widetilde{E}_{\alpha}(z)-\sum_{j=0}^{n-1} \frac{(-z)^j}{\Gamma ((j+1) (2-\alpha)+2)}\right|\leq C \frac{e^n}{n^{(2-\alpha)n}}|z|^n.
	\end{equation*}
	Consequently, $\widetilde{E}_{\alpha}\in\widetilde{\mathcal{A}}^u_{\{\overline{\G}^{\alpha-2}\}}(S_\alpha)$ and,  
    moreover, 
    $\widetilde{E}_{\alpha}$ is a characteristic function in the class $\widetilde{\mathcal{A}}^u_{\{\overline{\G}^{\alpha-2}\}}(S_\alpha)$.
\end{theorem}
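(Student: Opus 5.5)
The displayed uniform estimate is the content of \cite[Thm. 5, Thm. 20]{salinas62}, and I would take it as given. The task is then to derive the two ``consequently'' assertions. The plan has two steps. First, read off membership in $\widetilde{\mathcal{A}}^u_{\{\overline{\G}^{\alpha-2}\}}(S_\alpha)$ directly from the estimate. Second, show that the moduli of the coefficients of the expansion are equivalent to $\overline{\G}^{\alpha-2}$, and then invoke Theorem~\ref{optthm1}, $(1)\Rightarrow(3)$.

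\emph{Membership.} Since $\overline{\G}^{\alpha-2}=(n^{(\alpha-2)n})_{n\in\NN_0}$ with $0^0=1$, the estimate says
\[
\sup_{z\in S_\alpha,\,n\in\NN_0}\frac{|r_{\widetilde{E}_\alpha}(z,n)|}{e^n\, n^{(\alpha-2)n}|z|^n}\le C .
\]
Here the remainder is taken with respect to the series $\sum_j c_j z^j$, where $c_j=(-1)^j/\Gamma((2-\alpha)(j+1)+2)$. This is exactly $\|\widetilde{E}_\alpha\|_{\overline{\G}^{\alpha-2},e,\overset{\sim}{u}}\le C<\infty$, so $\widetilde{E}_\alpha\in\widetilde{\mathcal{A}}^u_{\overline{\G}^{\alpha-2},e}(S_\alpha)\subseteq \widetilde{\mathcal{A}}^u_{\{\overline{\G}^{\alpha-2}\}}(S_\alpha)$. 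Holomorphy on $S_\alpha$ is clear, since $\widetilde{E}_\alpha$ is entire.

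\emph{Characteristic property.} Set $a:=2-\alpha\in(0,2)$, so that $|c_j|=1/\Gamma(aj+2a+2)$. I need constants $B_1,B_2>0$ with
\[
B_1^j\, j^{-aj}\le |c_j|\le B_2^j\, j^{-aj}\qquad (j\in\NN_0),
\]
that is, $\Gamma(aj+2a+2)$ lies between $B_2^{-j}j^{aj}$ and $B_1^{-j}j^{aj}$. I would use Stirling's formula for the Gamma function at real arguments $x\ge 2$:
\[
c_1\, x^{x-1/2}e^{-x}\le\Gamma(x)\le c_2\, x^{x-1/2}e^{-x}.
\]
Apply it with $x=aj+b$, where $b=2a+2$. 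Then
\[
x^{x}=(aj)^{aj}\Bigl(1+\tfrac{b}{aj}\Bigr)^{aj}x^{b},
\]
and $(1+b/(aj))^{aj}\le e^{b}$. The factors $a^{aj}$, $e^{-aj}$, $e^{-b}$ are geometric or constant, and $x^{b-1/2}$ is polynomial in $j$. All of these are absorbed into $B^j$ factors, using $1\le x^{b-1/2}\le D^j$ for $j\ge1$ and suitable $D$. The case $j=0$ only affects constants.

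This gives $(|c_j|)_j\approx\overline{\G}^{\alpha-2}$, which is condition $(1)$ of Theorem~\ref{optthm1}. Hence $\widetilde{E}_\alpha$ is characteristic in $\widetilde{\mathcal{A}}^u_{\{\overline{\G}^{\alpha-2}\}}(S_\alpha)$.

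The only genuinely delicate point is the cited remainder estimate itself, valid uniformly on the whole unbounded sector $S_\alpha$ for every $\alpha\in(0,2)$. If it had to be reproved, I would write $\widetilde{E}_\alpha$ as a Laplace-type integral of a Mittag-Leffler kernel. I would then bound the remainder via the integral representation of $E_{A,B}$ on a Hankel contour, using the exponential decay of $E_{A,B}(-z)$ in $|\arg z|<(1-A/2)\pi$. Taking the paper's citation as given, the remaining steps are routine.
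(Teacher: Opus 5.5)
Your argument is correct and follows the same route the paper relies on: the paper gives no proof of its own and quotes the uniform remainder estimate from Rodríguez-Salinas; membership in $\widetilde{\mathcal{A}}^u_{\overline{\G}^{\alpha-2},e}(S_\alpha)$ is read off from that estimate, and the characteristic property comes from Stirling's formula applied to the explicit coefficients together with Theorem~\ref{optthm1}, $(1)\Rightarrow(3)$ (the same coefficient equivalence is what Theorem~\ref{mainTthm1} later needs implicitly). There are two harmless slips: since $(2-\alpha)(j+1)+2=aj+a+2$, you should have $b=a+2$ rather than $2a+2$, which does not affect the Stirling argument; and in your closing aside, $E_{2-\alpha,4-\alpha}(-z)$ decays only algebraically (like $1/z$) on $S_\alpha$, not exponentially, though that aside is not part of the proof.
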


Next we consider the Rodr\'{i}guez-Salinas function (see \cite{salinas62})
\begin{equation*}
	K_{RS}(z):=\frac{(1+z)\log(1+z)-z}{z^2},
\end{equation*}
which is clearly holomorphic in $S_2$.

This function satisfies the following theorem 

\begin{theorem}\label{RStheorem} (\cite[Thm. 3]{salinas62})
	There exists a constant $C$ (we can take $C=4$) such that
	 $$\forall\;z\in S_2
	 \;\forall\;n\in\NN_0:\;\;\;\left| K_{RS}(z)-\sum_{j=0}^{n-1} \frac{(-z)^j}{(j+1)(j+2)}\right|\leq C|z|^n.$$
	Consequently, for the constant sequence $\mathbf{1}:=(1)_{j\in\NN_0}$ we have that  $K_{RS}\in\widetilde{\mathcal{A}}^u_{\{\mathbf{1}\}}(S_2)$ and, moreover, $K_{RS}$ is a characteristic function in the class $\widetilde{\mathcal{A}}^u_{\{ \mathbf{1}\}}(S_2)$.
\end{theorem}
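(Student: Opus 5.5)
The plan is to prove the remainder estimate directly from an integral representation of $K_{RS}$, and then deduce the two consequences from the definitions and Theorem~\ref{optthm1}.

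\textbf{Integral representation.} For $|z|<1$, expanding $(1+z)\log(1+z)-z$ in powers of $z$ gives $\sum_{k\ge2}(-1)^k z^k/(k(k-1))$. Hence $K_{RS}(z)=\sum_{j\ge0}(-z)^j/((j+1)(j+2))$, and the proposed expansion is the Taylor series. To see its coefficients as moments, I would use
$$\frac{1}{(j+1)(j+2)}=\int_0^1 t^j(1-t)\,dt.$$
This yields, first for $|z|<1$ and then on all of $S_2$ by analytic continuation,
$$K_{RS}(z)=\int_0^1\frac{1-t}{1+zt}\,dt .$$
The right-hand side is holomorphic wherever $1+zt\neq0$ for $t\in[0,1]$, which holds on $S_2$ (argument in $(-\pi,\pi)$). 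The identity can also be checked directly by elementary integration.

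\textbf{Remainder estimate.} Using $\frac{1}{1+w}-\sum_{j<n}(-w)^j=\frac{(-w)^n}{1+w}$, the remainder is
$$r(z,n)=(-z)^n\int_0^1\frac{t^n(1-t)}{1+zt}\,dt .$$
It therefore suffices to bound $|1+zt|$ from below uniformly on $S_2$. This is the main obstacle: on $S_2$ the argument of $z$ can approach $\pm\pi$, so $1+zt$ can come close to $0$.

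I would handle it by splitting on $\Re(z)$. If $\Re(z)\ge0$, then $|1+zt|\ge1$. If $\Re(z)<0$, write $z=x+iy$ with $y\ne0$. Then $|1+zt|\ge |1+xt|$ and $|1+zt|\ge |y|t$, but neither bound is uniform; for instance, $z=-1/t_0+i\varepsilon$ nearly vanishes at $t=t_0$. So $\sup_{t}|1+zt|^{-1}$ is not bounded on $S_2$, and the pointwise bound must be replaced by control of the integral itself.

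The integral version is
$$\int_0^1\frac{t^n(1-t)}{|1+zt|}\,dt\le\int_0^1\frac{(1-t)}{|1+zt|}\,dt,$$
and I need this bounded uniformly in $z\in S_2$. Near the bad point $t_0=1/|x|$, the denominator behaves like $\sqrt{(1-|x|t)^2+y^2t^2}$, giving a logarithmic singularity $\log(1/|y|)$, which is still unbounded. The factor $(1-t)$ rescues only the case $t_0$ near $1$.

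So a genuinely uniform constant forces a different route. I would deform the integration path: replace $[0,1]$ by a contour from $0$ to $1$ in the half-plane opposite to $\arg z$, for example the path through $\tfrac12\mp \tfrac i2$. Along this contour $|1+zt|\ge c>0$ for all $z$ with $\Im z\gtrless0$, since $zt$ then stays away from the negative real axis direction. The bound $|t|^n|1-t|\le1$ must hold on the contour for $C$ to be independent of $n$. Polygonal paths through $\tfrac12\mp\tfrac i2$ have $|t|\le1$ but only with equality at the endpoint, and a circular arc inside the unit disc works. Checking that $|1+zt|$ is bounded below along it uniformly for $\arg z\in[0,\pi)$ is the key computation. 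Since $\arg(zt)\in(-\pi/2+\arg z\,\text{stuff})$, the plan is to choose the arc so that $\arg t\in[-\pi/2,0]$. Then $\arg(zt)\in(-\pi/2,\pi)$, and $|1+w|\ge$ a positive constant fails only near $w=-1$, which requires $\arg w$ close to $\pi$, hence $\arg t\approx0$, i.e.\ $t$ near the real segment where $|t|$ is near $1$ only at $t=1$. This last region needs care and is the crux. Taking $C=4$ should emerge from the explicit arc.

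\textbf{Consequences.} With the estimate in hand, membership in $\widetilde{\mathcal{A}}^u_{\{\mathbf 1\}}(S_2)$ follows from the definition with $h=1$. For characteristicity, the coefficients satisfy $|c_j|=1/((j+1)(j+2))$, which is equivalent to $\mathbf 1$ because $(j+1)(j+2)\le 6^j$ for every $j\in\NN_0$. Theorem~\ref{optthm1} $(1)\Rightarrow(3)$ then gives that $K_{RS}$ is characteristic in $\widetilde{\mathcal{A}}^u_{\{\mathbf 1\}}(S_2)$.
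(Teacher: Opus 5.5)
\textbf{The core estimate has a gap.} Your reduction is sound. This includes the representation $K_{RS}(z)=\int_0^1\frac{1-t}{1+zt}\,dt$ on $S_2$, the remainder formula $r(z,n)=(-z)^n\int_0^1\frac{t^n(1-t)}{1+zt}\,dt$, and your observation that absolute values on $[0,1]$ cannot give a uniform bound. Your deduction of the two consequences, via the definition with $h=1$ and Theorem~\ref{optthm1}, is also correct. The paper itself does not prove the estimate; it imports it from Rodr\'{\i}guez-Salinas.

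The problem is the step you call the crux, and as you formulate it, it cannot be closed. You aim for $|1+zt|\ge c>0$ along some contour from $0$ to $1$, uniformly in $z$ with $\operatorname{Im} z\ge 0$. But every such contour ends at $t=1$, where $1+zt=1+z$, and $z$ may tend to $-1$ inside $S_2$ (e.g.\ $z=-1+i\varepsilon$). So no choice of path gives a uniform lower bound for $|1+zt|$. Nothing in the proposal produces a constant independent of $z$, and ``$C=4$ should emerge from the explicit arc'' is unsupported.

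\textbf{The missing idea.} Bound the quotient $(1-t)/(1+zt)$ instead, using that the numerator vanishes at the same endpoint. For $\operatorname{Im} z\ge 0$, take $\gamma=\{t:\ |t-\tfrac12|=\tfrac12,\ \operatorname{Im} t\le 0\}$. The pole $-1/z$ lies in the open upper half-plane or on $(-\infty,0)$, so Cauchy's theorem lets you replace $[0,1]$ by $\gamma$. Inversion maps $\gamma$ onto the half-line $\{1+is:\ s\ge 0\}$, i.e.\ $1/t=1+is$ on $\gamma$. Hence
$$\frac{1-t}{1+zt}=\frac{1/t-1}{1/t+z}=\frac{is}{1+z+is},\qquad \Bigl|\frac{1-t}{1+zt}\Bigr|\le 1,$$
since $|1+z+is|\ge \operatorname{Im} z+s\ge s$ (and the quotient is $0$ at $s=0$, because $z\neq -1$). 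On $\gamma$ one has $|t|\le 1$, and $\gamma$ has length $\pi/2$. This gives $|r(z,n)|\le \frac{\pi}{2}|z|^n$, better than $C=4$. The case $\operatorname{Im} z<0$ follows from $K_{RS}(\bar z)=\overline{K_{RS}(z)}$, or by using the upper semicircle.

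\textbf{Minor slip.} The inequality $(j+1)(j+2)\le 6^j$ fails at $j=0$. Use $(j+1)(j+2)\le 2\cdot 6^j$ instead, which suffices for the equivalence with $\mathbf{1}$.
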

Finally, let $\alpha>2$ and take $\alpha'>\alpha$. For all $z\in S_\alpha$ we define
\begin{equation}\label{fctfalphaalphaprime}
	f_{\alpha,\alpha'}(z):= \int_{0}^{\infty(-\phi)} K_{RS}(zv^{\alpha'-2}) e^{-v} dv,
\end{equation}
where we choose $\phi  \in (-\frac{(\alpha-2)}{(\alpha'-2)}\frac{\pi}{2}, \frac{(\alpha-2)}{(\alpha'-2)}\frac{\pi}{2})$ with $|\arg(z)- (\alpha'-2)\phi|<\pi$.\\
\begin{theorem}\label{fgalphaalphaprimethm} (\cite[Thm. 4, Thm. 8]{salinas62})
	Let $\alpha>2$, $\alpha'>\alpha$ and
	$f_{\alpha,\alpha'}$ be the function from
	\eqref{fctfalphaalphaprime}. Then
				\begin{align*}
						\exists\;C,A\ge 1\;\forall\;z\in S_{\alpha}\;\forall\;n\in\NN_0:\;\;\;&\left|f_{\alpha,\alpha'} (z) -\sum_{j=0}^{n-1} \frac{(-z)^j}{(j+1)(j+2)} \Gamma (  (\alpha'-2)j+1)\right|\nonumber
						\\&
						\le CA^n \Gamma ( (\alpha'-2)n+1)|z|^n.
					\end{align*}
				Consequently, $f_{\alpha,\alpha'}\in\widetilde{\mathcal{A}}^u_{\{\overline{\G}^{\alpha'-2}\}}(S_\alpha)$ and,
				moreover, $f_{\alpha,\alpha'}$ is a characteristic function in the class $\widetilde{\mathcal{A}}^u_{\{ \overline{\G}^{\alpha'-2} \}}(S_\alpha)$.
	\end{theorem}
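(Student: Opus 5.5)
The statement has two parts. The first is the displayed remainder estimate, which is the content of \cite[Thm. 4, Thm. 8]{salinas62}. The second is its two consequences, membership in the class and characteristicity. I take the estimate from Rodríguez-Salinas, but I sketch how it follows from Theorem~\ref{RStheorem}, since the same mechanism drives the consequences.

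\textbf{The estimate.} Fix $z\in S_\alpha$ and choose $\phi$ as in \eqref{fctfalphaalphaprime}. For $v$ on the ray $\arg v=-\phi$ we have $\arg(zv^{\alpha'-2})=\arg z-(\alpha'-2)\phi\in(-\pi,\pi)$, so $zv^{\alpha'-2}\in S_2$ and Theorem~\ref{RStheorem} applies pointwise. Insert the expansion of $K_{RS}(zv^{\alpha'-2})$ into the integral. Integrating termwise, with Cauchy's theorem used to rotate the ray back to $[0,\infty)$, gives
\[
\int_0^{\infty(-\phi)}v^{(\alpha'-2)j}e^{-v}\,dv=\Gamma((\alpha'-2)j+1).
\]
This produces exactly the coefficients $\frac{(-1)^j}{(j+1)(j+2)}\Gamma((\alpha'-2)j+1)$. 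The remainder is bounded by
\[
4|z|^n\int_0^\infty t^{(\alpha'-2)n}e^{-t\cos\phi}\,dt=4|z|^n(\cos\phi)^{-(\alpha'-2)n-1}\Gamma((\alpha'-2)n+1).
\]
Since $|\phi|<\frac{\pi}{2}\frac{\alpha-2}{\alpha'-2}<\frac{\pi}{2}$, we may take $A=(\cos\phi)^{-(\alpha'-2)}$.

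The delicate point is uniformity in $z$. Each $z$ needs its own $\phi$, so I would cover $S_\alpha$ by finitely many subsectors, each with a fixed admissible $\phi$. These $\phi$ range over a compact subset of the open interval, so $\cos\phi$ stays bounded below by a positive constant. Analytic continuation (independence of $\phi$) makes $f_{\alpha,\alpha'}$ well defined. This compactness step is the main obstacle, and it is precisely why the hypothesis $\alpha'>\alpha$ is needed: with $\alpha'=\alpha$ the admissible $\phi$ would approach $\pm\pi/2$.

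\textbf{Membership.} By Stirling's formula, $\Gamma((\alpha'-2)n+1)\le D^n n^{(\alpha'-2)n}$. Together with the estimate this gives $f_{\alpha,\alpha'}\in\widetilde{\mathcal{A}}^u_{\{\overline{\G}^{\alpha'-2}\}}(S_\alpha)$.

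\textbf{Characteristicity.} I apply Theorem~\ref{optthm1}, (1)$\Rightarrow$(3). The coefficients satisfy
\[
|c_j|=\frac{\Gamma((\alpha'-2)j+1)}{(j+1)(j+2)}.
\]
The polynomial factor $(j+1)(j+2)\le 6^j$ is harmless. Stirling gives $\Gamma((\alpha'-2)j+1)\ge B^j j^{(\alpha'-2)j}$ for some $B>0$. Hence $(|c_j|)_j\approx\overline{\G}^{\alpha'-2}$, and $f_{\alpha,\alpha'}$ is characteristic in $\widetilde{\mathcal{A}}^u_{\{\overline{\G}^{\alpha'-2}\}}(S_\alpha)$.
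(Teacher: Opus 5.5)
Your proposal is correct and follows the route the paper intends. The paper gives no proof of its own: it cites Rodríguez-Salinas for the estimate, and the two consequences follow, as you say, from Stirling's formula and Theorem~\ref{optthm1}, (1)$\Rightarrow$(3). Your derivation of the estimate from Theorem~\ref{RStheorem}, by integrating the remainder bound along the ray $\arg v=-\phi$, is the natural one.

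One step, however, is wrong as written, and it is also unnecessary. A fixed admissible $\phi$ only serves for those $z$ with $|\arg z-(\alpha'-2)\phi|<\pi$. Since $(\alpha'-2)|\phi|<\frac{(\alpha-2)\pi}{2}$, this set misses a neighbourhood of the boundary rays $\arg z=\pm\frac{\alpha\pi}{2}$. So finitely many subsectors, each with a fixed $\phi$ from a compact subset of the open interval, cannot cover $S_\alpha$. The admissible $\phi$ must tend to $\pm\frac{\pi}{2}\frac{\alpha-2}{\alpha'-2}$ as $z$ approaches the boundary.

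Uniformity follows directly from your own preceding line instead. The bound in Theorem~\ref{RStheorem} holds with constant $4$ uniformly on $S_2$. Every admissible $\phi$ satisfies $\cos\phi\ge c:=\cos\bigl(\frac{\pi}{2}\frac{\alpha-2}{\alpha'-2}\bigr)>0$. Hence the constants $A=c^{-(\alpha'-2)}$ and $C=4/c$ work for all $z\in S_\alpha$ at once, and this is exactly where $\alpha'>\alpha$ is used. No covering or compactness argument is needed.
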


\subsection{Characteristic transform}\label{ssect.CharactTransf}
%
%
The $\mathcal{T}_{\M}$ transform originally appeared in the work of Rodríguez Salinas~\cite{salinas62} and was used by the authors in the previous work~\cite{ultraholomstab} to construct characteristic functions with precise control of the derivatives at $0$ in ultraholomorphic classes defined by uniform bounds on the derivatives.

In this context, the same functional transform $\mathcal{T}_{\M}$ modifies the coefficients of the asymptotic expansion of a function $f\in\widetilde{\mathcal{A}}^u_{\{\LL\}} (S)$ to obtain a new function in the class $\widetilde{\mathcal{A}}^u_{\{\LL\M\}} (S)$, under suitable hypotheses. This transformation allows us to construct characteristic functions in classes more general than the Gevrey classes considered previously.
\begin{definition}
	Let $\M$ be an \hyperlink{lc}{$(\text{lc})$} sequence, $\LL\in\RR_{>0}^{\NN_0}$, $S$ a sector and $f\in\widetilde{\mathcal{A}}^u_{\{\LL\}} (S)$. 
Then we define the $\mathcal{T}_{\M}-$transform of $f$ by
	\begin{equation*}
		\mathcal{T}_{\M}(f)(z):= \sum_{j=0}^{\infty}\frac{1}{2^j} \frac{M_j}{m_j^j} f(m_j z), \qquad z\in S.
	\end{equation*}
\end{definition}
This expression should be compared with the characteristic functions obtained in the ultradifferentiable and ultraholomorphic setting in \cite[Thm. 1]{thilliez}, \cite[Lemma 2.9]{compositionpaper}, \cite[Lemma 1]{Poschel} and \cite[Definition 4.5.]{ultraholomstab}. 
For every $j\in\NN_0$ let us set
\begin{equation*}
	R_j:=\sum^{\infty}_{n=0} \frac{1}{2^n} \frac{M_n}{m_n^n}m_n^j.
\end{equation*}
The following result provides estimates for this sequence in terms of the general sequence $\M$ we depart from.

\begin{lemma}\label{Scomplemma}(\cite[Lemma 4.6]{ultraholomstab})
	Let $\M\in\RR_{>0}^{\NN_0}$, then
	$$\forall\;j\in\NN_0:\;\;\;R_j\ge\frac{1}{2^j} M_j.$$
	If $\M$ is \hyperlink{lc}{$(\text{lc})$}, then also
	$$\forall\;j\in\NN_0:\;\;\;R_j\le 2 M_j,$$
	and so $(R_j)_{j\in\NN_0}$ is equivalent to $\M$.
\end{lemma}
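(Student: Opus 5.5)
Write $a_n:=\frac{1}{2^n}\frac{M_n}{m_n^n}>0$, so that $R_j=\sum_{n\ge0}a_n m_n^j$.

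\emph{Lower bound.} Since every term of the series is nonnegative, the sum dominates its single term with index $n=j$:
$$R_j\ge a_j m_j^j=\frac{1}{2^j}\frac{M_j}{m_j^j}\,m_j^j=\frac{1}{2^j}M_j .$$
This uses only positivity of $\M$ and holds for every $\M\in\RR_{>0}^{\NN_0}$.

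\emph{Upper bound under (lc).} The aim is the termwise estimate
$$\frac{M_n}{m_n^n}\,m_n^j\le M_j\qquad\text{for all } n,j\in\NN_0 .$$
Summing it gives $R_j\le M_j\sum_{n\ge0}2^{-n}=2M_j$. Log-convexity means that $\m$ is nondecreasing. We split into two cases.

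\begin{itemize}
\item If $j\ge n$, then
$$M_j=M_n\,m_n m_{n+1}\cdots m_{j-1}\ge M_n\,m_n^{\,j-n},$$
because each factor $m_k$ with $k\ge n$ is at least $m_n$. This is exactly the claim.
\item If $j<n$, then
$$M_n=M_j\,m_j\cdots m_{n-1}\le M_j\,m_n^{\,n-j},$$
because each factor $m_k$ with $k\le n-1$ is at most $m_n$. Rearranging gives $M_n m_n^{\,j-n}\le M_j$, again the claim.
\end{itemize}

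Both inequalities $B^jM_j\le R_j\le C^jM_j$ then hold with $B=1/2$ and $C=2$, which is equivalence in the sense of the remark for normalized sequences. Note that $R_0=\sum_n a_n$ need not equal $1$, but the inequality form $M_j\le AB^jL_j$ and its reverse still holds, so $(R_j)_{j\in\NN_0}\approx\M$.

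The argument is short. The only point needing care is the case split in the upper bound: the direction of the monotonicity of $\m$ enters in opposite ways in the two cases, and this is the sole place where (lc) is used.
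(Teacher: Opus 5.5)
Your proof is correct and is essentially the standard argument behind the cited \cite[Lemma 4.6]{ultraholomstab}, which the paper quotes without proof. The lower bound comes from the single summand $n=j$, and the upper bound from the termwise log-convexity inequality $m_n^j/M_j\le m_n^n/M_n$, which your two-case split establishes. The one slip is the claim $R_j\le 2^jM_j$ at $j=0$, which fails because $R_0>1=M_0$. As you note yourself, equivalence only needs $R_j\le 2M_j$ and $M_j\le 2^jR_j$, and you have both.
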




\begin{theorem}\label{CharacteristicTransformUniformAsymptotic}
        Let $\M$ be a \hyperlink{lc}{$(\text{lc})$} sequence, $\LL\in\RR_{>0}^{\NN_0}$ and for a given sector $S$ take $f\in\widetilde{\mathcal{A}}^u_{\{\LL\}} (S)$,  with $f\sim^u_{\{\LL\}}\sum_{j=0}^\infty c_j z^j$. Then, $\mathcal{T}_{\M} (f)\in \widetilde{\mathcal{A}}^u_{\{\LL\M\}} (S)$  with 
        \begin{equation}\label{eq.CoeffT_Mf}
        	\mathcal{T}_{\M} (f)\sim^u_{\{\LL\M\}} \sum_{j=0}^{\infty}R_jc_j z^j,\qquad z\in S.
        \end{equation}
        Moreover, for any $A>0$, $\mathcal{T}_{\M} : \widetilde{\mathcal{A}}^u_{\LL, A} (S)\rightarrow\widetilde{\mathcal{A}}^u_{\LL\M, A } (S)$ is a continuous linear operator.
\end{theorem}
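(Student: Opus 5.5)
The idea is to compute the remainder of $\mathcal{T}_{\M}(f)$ term by term. Each term of the series defining $\mathcal{T}_{\M}(f)$ is a dilate of $f$, so its remainder is a dilate of $r_f$.

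Fix $A>0$ with $f\in\widetilde{\mathcal{A}}^u_{\LL,A}(S)$ and set $K:=\|f\|_{\LL,A,\overset{\sim}{u}}$. The dilations are admissible: $\M$ is (lc) with $M_0=1$, hence $m_j>0$, and the sectors $S$ are invariant under multiplication by positive reals, so $m_jz\in S$ whenever $z\in S$. For each $j$ and $n$ we have
$$f(m_jz)=\sum_{k=0}^{n-1}c_km_j^kz^k+r_f(m_jz,n),\qquad |r_f(m_jz,n)|\le KA^nL_nm_j^n|z|^n.$$
Taking $n=0$ gives $|f(m_jz)|\le K$ uniformly in $z\in S$. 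The weights satisfy $\frac{1}{2^j}\frac{M_j}{m_j^j}\le\frac{1}{2^j}$, because (lc) gives $M_j\le m_{j-1}^j\le m_j^j$ for $j\ge1$, and the weight equals $1$ for $j=0$. Hence the series defining $\mathcal{T}_{\M}(f)$ converges uniformly on $S$, so $\mathcal{T}_{\M}(f)\in\mathcal{H}(S)$ and $|\mathcal{T}_{\M}(f)(z)|\le 2K$.

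Next multiply the expansion of $f(m_jz)$ by the weight $\frac{1}{2^j}\frac{M_j}{m_j^j}$ and sum over $j$. The coefficient series $\sum_j \frac{1}{2^j}\frac{M_j}{m_j^j}m_j^k=R_k$ are finite, since Lemma~\ref{Scomplemma} gives $R_k\le 2M_k$. We can therefore interchange the finitely many $k$-sums with the $j$-sum and obtain
$$\mathcal{T}_{\M}(f)(z)-\sum_{k=0}^{n-1}R_kc_kz^k=\sum_{j=0}^\infty\frac{1}{2^j}\frac{M_j}{m_j^j}\,r_f(m_jz,n).$$
The right-hand side is bounded by $KA^nL_n|z|^n\sum_j\frac{1}{2^j}\frac{M_j}{m_j^j}m_j^n=KA^nL_nR_n|z|^n\le 2KA^nL_nM_n|z|^n$, again by Lemma~\ref{Scomplemma}. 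This shows $\mathcal{T}_{\M}(f)\in\widetilde{\mathcal{A}}^u_{\LL\M,A}(S)$ with expansion $\sum_k R_kc_kz^k$, which gives \eqref{eq.CoeffT_Mf}. It also gives $\|\mathcal{T}_{\M}(f)\|_{\LL\M,A,\overset{\sim}{u}}\le 2\|f\|_{\LL,A,\overset{\sim}{u}}$, and linearity is evident, so $\mathcal{T}_{\M}$ is continuous for each fixed $A$. Taking the union over $A$ yields the Roumieu statement.

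The only delicate point is the last one: the constant produced by $R_n$ must be exactly $M_n$ with no extra geometric factor, so that the type $A$ is preserved. This is precisely the upper bound $R_n\le 2M_n$ in Lemma~\ref{Scomplemma}, which depends on (lc). The bound also justifies the interchange of summations, since all the series involved converge absolutely.
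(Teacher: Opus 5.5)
Your proof is correct and follows the paper's argument step by step. Boundedness of $f$ together with $M_j\le m_j^j$ gives normal convergence, the remainder of $\mathcal{T}_{\M}(f)$ is the weighted sum of the dilated remainders $r_f(m_jz,n)$, and $R_n\le 2M_n$ from Lemma~\ref{Scomplemma} yields $\|\mathcal{T}_{\M}(f)\|_{\LL\M,A,\overset{\sim}{u}}\le 2\|f\|_{\LL,A,\overset{\sim}{u}}$. One caveat, which the paper leaves equally implicit: your claim that $S$ is invariant under multiplication by positive reals is true only for unbounded sectors, so the transform and the theorem tacitly presuppose $S$ unbounded, as in the applications to $S_\alpha$.
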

\demo{Proof}
By definition of $\widetilde{\mathcal{A}}^u_{\{\LL\}} (S)$ we have that $f$ is bounded in $S$ by some constant $C>0$. Since $\M$ is log-convex, we have that $M_j\le m_j^j$ for all $j\in\NN_0$ and then
$$\sum_{j=0}^{\infty}\frac{1}{2^j} \frac{M_j}{m_j^j} \left|f(m_j z) \right| \leq  C  \sum_{j=0}^{\infty}\frac{1}{2^j}=2C, \qquad z\in S.$$
Consequently, the series defining $\mathcal{T}_{\M} (f)$ normally converges in the whole of $S$, it provides a function holomorphic in $S$.
For each $z\in S$ and every $n\in\NN_0$, we observe then that
\begin{align*}
	\left|\mathcal{T}_{\M} (f) (z) -  \sum_{k=0}^{n-1} R_kc_k z^k\right|&= \left|\sum_{j=0}^{\infty}\frac{1}{2^j} \frac{M_j}{m_j^j} f(m_j z) -  \sum_{k=0}^{n-1} c_k  (\sum^{\infty}_{j=0} \frac{1}{2^j} \frac{M_j}{m_j^j}m_j^k)  z^k\right|
	\\&
	=\left|\sum_{j=0}^{\infty}\frac{1}{2^j} \frac{M_j}{m_j^j}  \left(f(m_j z) -  \sum_{k=0}^{n-1} c_k   (m_jz)^k\right)\right|
	\\&
	\le\sum_{j=0}^{\infty}\frac{1}{2^j}\frac{M_j}{m_j^j}C A^n L_n m_j^n |z|^n= C A^n L_n R_n |z|^n
	\\&
	\le 2 C A^n L_n M_n |z|^n,
\end{align*}
where the last estimate holds by Lemma \ref{Scomplemma}.
Finally, suppose $f\in \widetilde{\mathcal{A}}^u_{\LL,A} (S)$ for some $A>0$, then for all $n\in\NN_0$ we can estimate
\begin{align*}
	&\left|\mathcal{T}_{\M} (f) (z) -  \sum_{k=0}^{n-1} R_kc_k z^k\right|\leq \sum_{j=0}^{\infty}\frac{1}{2^j} \frac{M_j}{m_j^j}  \left|f(m_j z) -  \sum_{k=0}^{n-1} c_k   (m_jz)^k\right|
	\\&
	\le \|f\|_{\M,A,\overset{\sim}{u}} A^n L_n|z|^n \sum_{j=0}^{\infty}\frac{1}{2^j}M_jm_j^{n-j}=\|f\|_{\M,A,\overset{\sim}{u}} A^n L_n R_n|z|^n.
\end{align*}
By Lemma~\ref{Scomplemma} we know that $R_n\le 2M_n$, so $\mathcal{T}_{\M} (f)\in \mathcal{A}_{\LL\M, A} (S)$, and moreover
$$\|\mathcal{T}_{\M} (f)\|_{\LL\M, A,\overset{\sim}{u}}=\sup_{z\in S,n\in\NN_{0}}\frac{|\mathcal{T}_{\M} (f) (z) -  \sum_{k=0}^{n-1} R_kc_k z^k|}{A^{n}L_{n}M_{n}|z|^n}\le 2\|f\|_{\M,A,\overset{\sim}{u}}.$$
It follows that $\mathcal{T}_{\M} : \widetilde{\mathcal{A}}^u_{\LL, A} (S)\rightarrow\widetilde{\mathcal{A}}^u_{\LL\M, A } (S) $ is a well-defined continuous linear operator for any $A>0$.
\qed\enddemo

\begin{theorem}\label{CharacteristicTransformUniformAsymptotic1}
 Let $\M$ be a \hyperlink{lc}{$(\text{lc})$} sequence, $\LL\in\RR_{>0}^{\NN_0}$ and for a given sector $S$ take $f\in \widetilde{\mathcal{A}}^u_{\{\LL\}} (S)$  with $f\sim^u_{\{\LL\}}\sum_{j=0}^{\infty}c_j z^j$. If $(|c_j|)_{j\in\NN_0}$ is equivalent to $\LL$, then $(|c_j R_j|)_{j\in\NN_0}$ is
 equivalent to $\LL\M$. Consequently, $\mathcal{T}_{\M}(f)$ is characteristic in the class $\widetilde{\mathcal{A}}^u_{\{\LL\M\}} (S)$.
\end{theorem}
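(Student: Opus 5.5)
The plan is to combine the two-sided estimates of Lemma~\ref{Scomplemma} with the hypothesis on $(|c_j|)_j$, and then to read off the conclusion from Theorem~\ref{CharacteristicTransformUniformAsymptotic} together with Theorem~\ref{optthm1}.

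First I prove the equivalence $(|c_jR_j|)_{j}\approx\LL\M$. By hypothesis there are constants $A_1,B_1,A_2,B_2>0$ such that
\begin{equation*}
A_1B_1^jL_j\le |c_j|\le A_2B_2^jL_j,\qquad j\in\NN_0.
\end{equation*}
Because $\M$ is \hyperlink{lc}{$(\text{lc})$}, Lemma~\ref{Scomplemma} gives $2^{-j}M_j\le R_j\le 2M_j$, and $R_j>0$, so $|c_jR_j|=|c_j|R_j$. Multiplying the two chains of inequalities yields
\begin{equation*}
A_1(B_1/2)^jL_jM_j\le |c_jR_j|\le 2A_2B_2^jL_jM_j,
\end{equation*}
which is exactly the equivalence $(|c_jR_j|)_{j}\approx\LL\M$ in the sense of the paper, namely $\LL\M\subset(|c_jR_j|)_j$ and $(|c_jR_j|)_j\subset\LL\M$.

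For the second assertion, Theorem~\ref{CharacteristicTransformUniformAsymptotic} shows that $\mathcal{T}_{\M}(f)\in\widetilde{\mathcal{A}}^u_{\{\LL\M\}}(S)$ with expansion $\sum_j R_jc_jz^j$ as in \eqref{eq.CoeffT_Mf}. Applying Theorem~\ref{optthm1}, (1)$\Rightarrow$(3), with $\LL\M$ in place of $\LL$ and $\mathcal{T}_{\M}(f)$ in place of $f$, we conclude that $\mathcal{T}_{\M}(f)$ is characteristic in $\widetilde{\mathcal{A}}^u_{\{\LL\M\}}(S)$.

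I do not expect a real obstacle here. The one point needing care is that the coefficients of the asymptotic expansion of $\mathcal{T}_{\M}(f)$ are indeed $R_jc_j$. These coefficients are uniquely determined by \eqref{eq.coeff.asymp.expan.lim.remainder}, so the expansion supplied by Theorem~\ref{CharacteristicTransformUniformAsymptotic} is the expansion that enters Theorem~\ref{optthm1}.
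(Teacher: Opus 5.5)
Your proposal is correct and is essentially the paper's own argument. The bounds $2^{-j}M_j\le R_j\le 2M_j$ from Lemma~\ref{Scomplemma}, combined with the hypothesis, give $(|c_jR_j|)_j\approx\LL\M$; then Theorem~\ref{CharacteristicTransformUniformAsymptotic} and the implication (1)$\Rightarrow$(3) of Theorem~\ref{optthm1}, applied with $\LL\M$, yield the conclusion, and your version simply makes the constants explicit where the paper does this in two lines.
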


\demo{Proof}
The first assertion is clear from Lemma~\ref{Scomplemma}. The second one stems from Theorems~\ref{CharacteristicTransformUniformAsymptotic} and~\ref{optthm1}.
\qed\enddemo

\subsection{Construction of characteristic functions}\label{ssect.ConstrCharactFunct}
Given a sequence $\M\in\RR_{>0}^{\NN_0}$ and $\alpha>0$ we construct now, under suitable assumptions, characteristic functions in $\widetilde{\mathcal{A}}^u_{\{\M\}} (S_{\alpha})$.
For this we are using the basic functions from Subsection \ref{ssect.BasicFct} and the characteristic transform from Subsection \ref{ssect.CharactTransf}.
%
%
%




\begin{theorem}\label{mainTthm1}
	Let $\M\in\RR_{>0}^{\NN_0}$ and $\alpha>0$.
	\begin{enumerate}
		\item If $\alpha< 2$, we assume that  $\overline{\G}^{2-\alpha}\M:=(j^{(2-\alpha)j}M_j)_{j\in\NN_0}$
		is equivalent to an \hyperlink{lc}{$(\text{lc})$} sequence $\LL$. Then, $\mathcal{T}_{\LL} (\widetilde{E}_{\alpha}) $ is  characteristic in the class $\widetilde{\mathcal{A}}^u_{\{\M\}} (S_{\alpha})$.
		\item If $\alpha= 2$, we assume that  $\M$
		is equivalent to an \hyperlink{lc}{$(\text{lc})$} sequence $\LL$. Then, $\mathcal{T}_{\LL} (K_{RS}) $ is characteristic in the class $\widetilde{\mathcal{A}}^u_{\{\M\}} (S_{2})$.
		\item If $\alpha>2$, we assume that there exists $\a'>\a$ such that $\overline{\G}^{2-\alpha'}\M:=(j^{(2-\alpha')j}M_j)_{j\in\NN_0}$
		is equivalent to an \hyperlink{lc}{$(\text{lc})$} sequence $\LL$. Then, $\mathcal{T}_{\LL} (f_{\alpha,\alpha'}) $ is characteristic in the class $\widetilde{\mathcal{A}}^u_{\{\M\}} (S_{\alpha})$.
	\end{enumerate}
\end{theorem}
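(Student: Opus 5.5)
The plan is to handle the three cases simultaneously. In each one we start from a basic function $g$ that is characteristic in a Gevrey-type class $\widetilde{\mathcal{A}}^u_{\{\mathbf{P}\}}(S_\alpha)$, where the moduli of its coefficients are equivalent to $\mathbf{P}$. We then apply $\mathcal{T}_{\LL}$ and Theorem~\ref{CharacteristicTransformUniformAsymptotic1}. Concretely:
\begin{itemize}
\item for $\alpha<2$, take $g=\widetilde{E}_\alpha$ and $\mathbf{P}=\overline{\G}^{\alpha-2}$;
\item for $\alpha=2$, take $g=K_{RS}$ and $\mathbf{P}=\mathbf{1}$;
\item for $\alpha>2$, take $g=f_{\alpha,\alpha'}$ and $\mathbf{P}=\overline{\G}^{\alpha'-2}$.
\end{itemize}
The hypothesis in each case is exactly that $\overline{\G}^{-s}\M\approx\LL$ with $\LL$ log-convex, where $s=\alpha-2$, $0$ or $\alpha'-2$ respectively. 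Hence $\mathbf{P}\LL\approx\overline{\G}^{s}\overline{\G}^{-s}\M=\M$, using that $(j^{sj})(j^{-sj})=1$ (with the convention $0^0=1$). Since equivalent sequences define the same classes, it is enough to show that $\mathcal{T}_{\LL}(g)$ is characteristic in $\widetilde{\mathcal{A}}^u_{\{\mathbf{P}\LL\}}(S_\alpha)$.

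\textbf{Step 1: the coefficients of $g$ are equivalent to $\mathbf{P}$.} We read off the coefficients from Theorems~\ref{Ealphathm}, \ref{RStheorem} and \ref{fgalphaalphaprimethm}.
\begin{itemize}
\item For $\widetilde{E}_\alpha$, $|c_j|=1/\Gamma((2-\alpha)(j+1)+2)$. Using $\Gamma(x+1)\approx x^x e^{-x}$ up to geometric factors (Stirling, \eqref{Stirling}), together with the fact that the shift by a bounded amount in the argument only contributes factors of the form $j^{O(1)}\cdot C^j$, we get $|c_j|\approx j^{-(2-\alpha)j}=j^{(\alpha-2)j}$ up to geometric factors. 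Hence $(|c_j|)\approx\overline{\G}^{\alpha-2}$.
\item For $K_{RS}$, $|c_j|=1/((j+1)(j+2))$, which lies between $6^{-j}$ and $1$. Hence $(|c_j|)\approx\mathbf{1}$.
\item For $f_{\alpha,\alpha'}$, $|c_j|=\Gamma((\alpha'-2)j+1)/((j+1)(j+2))$. By Stirling, $\Gamma((\alpha'-2)j+1)\approx ((\alpha'-2)j)^{(\alpha'-2)j}e^{-(\alpha'-2)j}$, which is equivalent to $j^{(\alpha'-2)j}$, while the polynomial factor is absorbed geometrically.
\end{itemize}

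\textbf{Step 2: transfer through $\mathcal{T}_{\LL}$.} Since $\LL$ is (lc), Theorem~\ref{CharacteristicTransformUniformAsymptotic} gives $\mathcal{T}_{\LL}(g)\in\widetilde{\mathcal{A}}^u_{\{\mathbf{P}\LL\}}(S_\alpha)=\widetilde{\mathcal{A}}^u_{\{\M\}}(S_\alpha)$. By Theorem~\ref{CharacteristicTransformUniformAsymptotic1}, the coefficients $(|c_jR_j|)$ are equivalent to $\mathbf{P}\LL\approx\M$. Theorem~\ref{optthm1}, (1)$\Rightarrow$(3), applied with $\M$ in the role of $\LL$, then shows that $\mathcal{T}_{\LL}(g)$ is characteristic in $\widetilde{\mathcal{A}}^u_{\{\M\}}(S_\alpha)$. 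Here we note that the definition of characteristic depends only on the class, and the class is unchanged under equivalence of sequences.

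\textbf{Expected main obstacle.} The only genuinely delicate point is the Stirling bookkeeping in Step 1, specifically for $\Gamma$ evaluated at non-integer, shifted arguments. We will handle it uniformly with the bounds $x^x e^{-x}\le \Gamma(x+1)\le C x^{x+1}e^{-x}$ for $x\ge1$, plus a separate check of the finitely many small $j$ (including $j=0$). A minor additional point is that $\LL$ need not satisfy $L_0=1$. This is harmless, because the equivalence relation and Lemma~\ref{Scomplemma} (via $m_j$) only involve the sequence up to geometric factors; if needed, we first normalize $\LL$ by dividing by $L_0$, which preserves (lc).
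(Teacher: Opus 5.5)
Your proposal is correct and follows the paper's proof: combine the basic functions of Theorems~\ref{Ealphathm}, \ref{RStheorem} and \ref{fgalphaalphaprimethm} with Theorems~\ref{CharacteristicTransformUniformAsymptotic} and~\ref{CharacteristicTransformUniformAsymptotic1}, use the equivalence $\overline{\G}^{s}\LL\approx\M$, and in addition you make explicit the Stirling check (left implicit in the paper) that the coefficient moduli of the basic functions are equivalent to $\overline{\G}^{s}$. The only slip is that $1/((j+1)(j+2))\ge 6^{-j}$ fails at $j=0$; this is harmless because equivalence allows a multiplicative constant.
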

\demo{Proof}
This follows by Theorems~\ref{Ealphathm}, \ref{RStheorem}, \ref{fgalphaalphaprimethm},  \ref{CharacteristicTransformUniformAsymptotic} and \ref{CharacteristicTransformUniformAsymptotic1}, and  from the fact that $\overline{\G}^{\alpha-2}\LL$ in case 1, resp. $\LL$ in case 2, resp. $\overline{\G}^{\alpha'-2}\LL$ in case 3, is equivalent to $\M$.
\qed\enddemo
\begin{remark}
	As it was carefully explained in~\cite[Remark 4.10]{ultraholomstab}), in order to guarantee that the hypotheses in the previous theorem are satisfied one can compute the lower Matuszewska index  $\gamma(\M)$, associated to the sequence of quotients $\m$ in the general theory of $O$-regular variation of sequences, and check whether it is greater than $\a-2$.
\end{remark}


\begin{remark}\label{rem.signofcoefficients} For future use, observe that for any of the characteristic functions $f$ constructed in Theorem~\ref{mainTthm1}, if  $f\sim^u_{\{\M\}} \sum_{j=0}^\infty c_j z^j$, the coefficients are real and have alternating signs, in fact they satisfy $c_j=(-1)^j|c_j|$.
\end{remark}

\section{Auxiliary formulas for point-wise multiplication and composition of asymptotic expansion}\label{sec.formula}

Following the approach in \cite{AubersonMennessier81}, we establish formulas describing the behavior of coefficients and remainders under point-wise multiplication and composition for functions, admitting a uniform asymptotic expansion at the point 0. 
Since we are dealing with different functions, in order to emphasize the dependence on the considered function we write for the appearing coefficients in the asymptotic expansions $c_j^f$, $c_j^g$ and so on. 

On the one hand, let $f$ and $g$ be functions defined in an arbitrary sector $S\subset \mathcal{R}$, and write $f(z)=\sum_{j=0}^{n-1}c^f_jz^j+r_f(z,n)$ and $g(z)=\sum_{j=0}^{n-1}c^g_jz^j+r_g(z,n)$. Then for the  point-wise multiplication $h:=f\cdot g$ we get $h(z)=\sum_{j=0}^{n-1}c^h_jz^j+r_h(z,n)$ with
\begin{equation}\label{eq.stablepointwisepropequ1}
c^h_j=\sum_{k=0}^jc^f_kc^g_{j-k},\hspace{15pt}r_h(z,n)=r_f(z,n)g(z)+\sum_{k=0}^{n-1}c^f_kz^kr_g(z,n-k);
\end{equation}
see also \cite[$(33)$]{AubersonMennessier81}.  

We also include, following~\cite[$(A.3)$]{AubersonMennessier81}, a version of the second equality in~\eqref{eq.stablepointwisepropequ1} for the case that $h=f^j=f\cdot f^{j-1}$ for a function $f$ such that $c_0^f=0$, and for any $n>j$. We write
\begin{align*}
(f(z))^j&=f(z)(f(z))^{j-1}=\left(\sum_{i=1}^{n-j}c^f_i z^i+r_f(z,n-j+1)\right)(f(z))^{j-1}\\
		&=\sum_{i=1}^{n-j}c^f_i z^i\left[\sum_{\ell=j-1}^{n-i-1}c^{f^{j-1}}_{\ell}z^{\ell}+r_{f^{j-1}}(z,n-i)\right]+r_f(z,n-j+1)(f(z))^{j-1}.
	\end{align*}
Since $f(z)=r_f(z,1)$, one deduces that
	\begin{equation}\label{eq.remaindern.fpowerj}
		 r_{f^j}(z,n)=\sum_{i=1}^{n-j}c^f_iz^ir_{f^{j-1}}(z,n-i)+r_f(z,n-j+1)\left(r_f(z,1)\right)^{j-1}.
	\end{equation}

On the other hand, let $f$ be a function defined in an arbitrary sector $S\subset \mathcal{R}$ with $c_0^f=0$, and $g$ be a function defined in a region $T\subset \CC$ containing the range of $f$. For $\varphi:=g\circ f$ we can write for all $z\in S$:
	$$\varphi(z)=c^g_0+\sum_{k=1}^{n-1}c^g_k(f(z))^k+r_g(f(z),n).$$
	Moreover, we have that
	\begin{equation*}
		(f(z))^k=\sum_{j=k}^{n-1}c^{f^k}_jz^j+r_{f^k}(z,n),
	\end{equation*}
	with
	\begin{equation}\label{equ71}
		c^{f^k}_j=\sum_{m_1+\dots+m_k=j}c^{f}_{m_1}\cdots c^{f}_{m_k},
	\end{equation}
	see \cite[$(70)$ \& $(71)$]{AubersonMennessier81}. Next we recall the crucial formulas \cite[$(73)$, $(74)$ \& $(75)$]{AubersonMennessier81}:
	\begin{equation*}
		\varphi(z)=\sum_{j=1}^{n-1}c^\varphi_jz^j+r_{\varphi}(z,n),
	\end{equation*}
	\begin{equation}\label{equ74}
		c^\varphi_0=c^g_0,\hspace{15pt}c^\varphi_n=\sum_{j=1}^{n}c^g_jc^{f^j}_n,\;\;\;n\in\NN,
	\end{equation}
	and
	\begin{equation}\label{eq.equ75.roumieu}
		r_\varphi(z,n)=\sum_{j=1}^{n-1}c^g_jr_{f^j}(z,n)+r_g(f(z),n).
	\end{equation}

\section{Stability under point-wise multiplication}\label{sec.alg}

We start with the following generalization of \cite[Thm. 3]{AubersonMennessier81}:

\begin{proposition}\label{prop.stablepointwiseprop}
Let $\M=(M_j)_j\in\RR_{>0}^{\NN_0}$ be a sequence with $M_0=1$, and let $S$ be a sector. If $\M$ satisfies \hyperlink{s-alg}{$(\on{alg})$}, then the class $\mathcal{\widetilde{A}}^{u}_{[\M]}(S)$ is closed under point-wise multiplication of functions; i.e. $\mathcal{\widetilde{A}}^{u}_{[M]}(S)$ is an algebra. 
\end{proposition}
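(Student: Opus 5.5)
We will work directly with the remainder identity \eqref{eq.stablepointwisepropequ1}. Given $f,g\in\mathcal{\widetilde{A}}^{u}_{\M,h}(S)$ (for the Roumieu case, after enlarging we may take a common $h$, since $\mathcal{\widetilde{A}}^{u}_{\M,h}(S)\subseteq\mathcal{\widetilde{A}}^{u}_{\M,h'}(S)$ for $h\le h'$), set $F:=\|f\|_{\M,h,\overset{\sim}{u}}$ and $G:=\|g\|_{\M,h,\overset{\sim}{u}}$. The candidate expansion for $fg$ is the Cauchy product $c^{fg}_j=\sum_{k=0}^jc^f_kc^g_{j-k}$. Our goal is to show $fg\in\mathcal{\widetilde{A}}^{u}_{\M,h''}(S)$ with $h''$ a fixed multiple of $h$ (for instance $h''=2Ch$, where $C$ is the constant in \hyperlink{s-alg}{$(\on{alg})$}), together with a bound $\|fg\|_{\M,h'',\overset{\sim}{u}}\le K\,F\,G$ for a constant $K$ independent of $h$.

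First we estimate the two pieces of $r_{fg}(z,n)=r_f(z,n)g(z)+\sum_{k=0}^{n-1}c^f_kz^kr_g(z,n-k)$. Taking $n=0$ in the definition of the norm gives $|g(z)|=|r_g(z,0)|\le G$, since $M_0=1$. Hence the first term is at most $F G h^nM_n|z|^n$. For the sum we use \eqref{cjestimation}, in the quantitative form $|c^f_k|\le F h^kM_k$. This follows from \eqref{eq.coeff.asymp.expan.lim.remainder}, because $|z^{-k}r_f(z,k)|\le F h^kM_k$ for all $z\in S$. It also follows from $c^f_k z^k = r_f(z,k)-r_f(z,k+1)$ directly, at the cost of a harmless factor. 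Each summand is then bounded by $F G h^n M_kM_{n-k}|z|^n$.

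Next we apply \hyperlink{s-alg}{$(\on{alg})$}: $M_kM_{n-k}\le C^nM_n$. Summing over $k$ gives
$$|r_{fg}(z,n)|\le FG\,(1+nC^n)\,h^nM_n|z|^n\le FG\,(2C)^n h^n M_n|z|^n,$$
using $1+n\le 2^n$ and $C\ge1$. This yields $fg\in\mathcal{\widetilde{A}}^{u}_{\M,2Ch}(S)$ with norm at most $FG$. The Roumieu statement follows immediately.

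For the Beurling case, let $h''>0$ be given and choose $h:=h''/(2C)$. Since $f,g\in\mathcal{\widetilde{A}}^{u}_{\M,h}(S)$, the estimate above gives $fg\in\mathcal{\widetilde{A}}^{u}_{\M,h''}(S)$. As $h''$ was arbitrary, $fg\in\mathcal{\widetilde{A}}^{u}_{(\M)}(S)$.

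The only mildly delicate point is justifying the coefficient bound $|c_k|\le F h^kM_k$ with the same constant $F$ as the norm. This follows from \eqref{eq.coeff.asymp.expan.lim.remainder} by letting $z\to0$. We must also check that the product formula \eqref{eq.stablepointwisepropequ1} is an exact algebraic identity, valid for every $n$ including $n=0$. Both points are routine, so we expect no real obstacle.
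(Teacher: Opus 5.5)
Your proposal is correct and follows essentially the same route as the paper. You apply the remainder identity \eqref{eq.stablepointwisepropequ1}, bound $|g|$ and $|c^f_k|$ through the defining estimates, and use $(\mathrm{alg})$ in the form $M_kM_{n-k}\le C^nM_n$. The only cosmetic difference is that you take a common $h$ and use $1+n\le 2^n$, whereas the paper keeps $d_1,d_2$ separate and sums with the binomial theorem to reach type $C(d_1+d_2)$.
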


\begin{proof} Since $\M$ satisfies \hyperlink{s-alg}{$(\on{alg})$}, we have $M_jM_k\le C^{j+k}M_{j+k}$ for some $C\geq 1$ and for all $j,k\in\NN_0$.
Let $f,g\in\mathcal{\widetilde{A}}^{u}_{[\M]}(S)$.  Now, by the definition of the classes, we have $f\in\mathcal{\widetilde{A}}^{u}_{\M,d_1}(S)$ and $g\in\mathcal{\widetilde{A}}^{u}_{\M,d_2}(S)$ for some (resp. all) $d_i>0$, and so there exist $A_1,A_2>0$ such that
$$
|r_f(z,n)|\le  A_1 d_1^n M_n|z|^n,\quad 
|r_g(z,n)|\le  A_2 d_2^n M_n|z|^n,\quad z\in S.
$$ 
Thus by \eqref{eq.stablepointwisepropequ1} and \eqref{cjestimation}, we estimate as follows for all $n\in\NN_0$ and $z\in S$:

\begin{align*}
|r_h(z,n)|&\le |r_f(z,n)||g(z)|+\sum_{k=0}^{n-1}|c^f_k||z|^k|r_g(z,n-k)|\\&
\le A_1 d_1^n M_n|z|^n A_2 + \sum_{k=0}^{n-1} A_1 d_1^k M_k|z|^k A_2 d_2^{n-k}M_{n-k}|z|^{n-k}
\\&
\le A_1A_2d_1^nM_n|z|^n + A_1A_2\sum_{k=0}^{n-1} d_1^k d_2^{n-k}C^nM_n|z|^n
\\&
\le A_1A_2d_1^nM_n|z|^n+A_1A_2C^nM_n|z|^n\sum_{k=0}^{n}\binom{n}{k}d_1^kd_2^{n-k}
\\&
\le A_1A_2d_1^nM_n|z|^n+ A_1A_2C^nM_n|z|^n(d_1+d_2)^n
\\&
\le 2A_1A_2(C(d_1+d_2))^nM_n|z|^n.
\end{align*}
This finishes the Roumieu part. Concerning the Beurling class $\mathcal{\widetilde{A}}^{u}_{(\M)}$ we use the same estimate, and notice that $C(d_1+d_2)\rightarrow 0$ when $d_i\rightarrow 0$ since $C$ is only depending on $\M$ via \hyperlink{s-alg}{$(\on{alg})$} but not on $d_i$, $i=1,2$.
\end{proof}

Conversely, if we can ensure the existence of very well-suited characteristic functions in the Roumieu class, closure under point-wise multiplication implies condition \hyperlink{s-alg}{$(\on{alg})$}. The following result gathers different implications.

\begin{theorem}\label{thm.stablepointwiseprop}
Let $\M=(M_j)_j\in\RR_{>0}^{\NN_0}$ be a sequence with $M_0=1$ and $\alpha>0$. Consider the following statements:
\begin{itemize}
    \item[(i)] In case $\alpha\leq 2$, $\overline{\G}^{2-\alpha}\M$ is equivalent to an \hyperlink{lc}{$(\text{lc})$} sequence; if $\alpha>2$, there exists $\a'>\a$ such that $\overline{\G}^{2-\alpha'}\M$ is equivalent to an \hyperlink{lc}{$(\text{lc})$} sequence.
\item[(ii)] $\M$ satisfies \hyperlink{s-alg}{$(\on{alg})$}.
\item[(iii)] $\mathcal{\widetilde{A}}^{u}_{\{\M\}}(S_{\a})$ is an algebra.
\item[(iv)] There exists a characteristic function $f\in \mathcal{\widetilde{A}}^{u}_{\{\M\}} (S_\alpha)$, with  $f\sim^u_{\{\M\}} \sum_{j=0}^\infty c^f_j z^j$, such that $(|c^f_j|)_{j\in\NN_0}$ is equivalent to $\M$ and, moreover,  $c^f_j=(-1)^j|c^f_j|$ for all $j\in\NN_0$.
\end{itemize} 
Then, one has $(i)\Rightarrow(ii)\Rightarrow(iii)$, $(i)\Rightarrow(iv)$ and $(iii)+(iv)\Rightarrow(ii)$. 
\end{theorem}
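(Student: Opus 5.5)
We prove the four implications separately; only the last one needs real work.

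\emph{(i)$\Rightarrow$(ii).} Write $\overline{\G}^{s}\M\approx\LL$ with $\LL$ \hyperlink{lc}{$(\text{lc})$}, where $s=2-\alpha$ if $\alpha\le 2$ and $s=2-\alpha'$ if $\alpha>2$. Normalize so that $L_0=1$; this is harmless since $M_0=1$. Then $M_j\approx j^{-sj}L_j$. By \eqref{eq.alg.from.lc} we have $L_jL_k\le L_{j+k}$. It remains to compare $j^{-sj}k^{-sk}$ with $(j+k)^{-s(j+k)}$ up to a factor $C^{j+k}$. There are two cases.
\begin{itemize}
\item If $s\ge 0$, the ratio $(j+k)^{s(j+k)}/(j^{sj}k^{sk})$ equals $\binom{j+k}{j}^s$ up to geometric factors, by \eqref{Stirling}. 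It is therefore bounded by $2^{s(j+k)}e^{s(j+k)}$.
\item If $s<0$, the ratio is $\le 1$ trivially.
\end{itemize}
Combining the constants from the equivalence gives \hyperlink{s-alg}{$(\on{alg})$}.

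\emph{(ii)$\Rightarrow$(iii)} is Proposition~\ref{prop.stablepointwiseprop}.

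\emph{(i)$\Rightarrow$(iv).} Take the characteristic function built in Theorem~\ref{mainTthm1}. By Theorems~\ref{CharacteristicTransformUniformAsymptotic1}, \ref{Ealphathm}, \ref{RStheorem} and \ref{fgalphaalphaprimethm}, its coefficients are equivalent to $\overline{\G}^{-s}\LL\approx\M$. Their signs alternate by Remark~\ref{rem.signofcoefficients}.

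\emph{(iii)+(iv)$\Rightarrow$(ii).} This is the main obstacle. Let $f$ be as in (iv). By (iii), $h=f^2$ lies in $\widetilde{\mathcal{A}}^u_{\{\M\}}(S_\alpha)$. By \eqref{cjestimation} there are $A,B$ with $|c^h_n|\le AB^nM_n$. By \eqref{eq.stablepointwisepropequ1},
$$c^h_n=\sum_{k=0}^n c^f_kc^f_{n-k}=(-1)^n\sum_{k=0}^n|c^f_k||c^f_{n-k}|.$$
The alternating signs are exactly what prevents cancellation: all summands carry the common sign $(-1)^n$. Hence each single term satisfies $|c^f_k||c^f_{n-k}|\le |c^h_n|\le AB^nM_n$.

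Next we use the equivalence $|c^f_j|\ge DE^{-j}M_j$, with $E\ge 1$. This gives $M_kM_{n-k}\le AD^{-2}(BE)^nM_n$ for all $k\le n$. Setting $j=k$ and $n=j+k$, this is \hyperlink{s-alg}{$(\on{alg})$}, except for the multiplicative constant $AD^{-2}$.

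To absorb that constant, we need $AD^{-2}\le C'^{\,j+k}$, which holds whenever $j+k\ge 1$ after enlarging the base. The case $j=k=0$ is trivial since $M_0=1$. This yields \hyperlink{s-alg}{$(\on{alg})$} with $C=\max(1,AD^{-2})BE$.

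Note that characteristicity of $f$ is not even needed here; only the coefficient equivalence and the sign pattern are used.
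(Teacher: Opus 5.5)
Your proposal is correct and follows essentially the paper's proof: your (i)$\Rightarrow$(ii) is the paper's argument via the moderate growth of $\overline{\G}^{s}$ combined with (alg) for the log-convex $\LL$, made explicit through \eqref{Stirling}. The remaining implications coincide with the paper's: Proposition~\ref{prop.stablepointwiseprop}, then Theorem~\ref{mainTthm1} together with Remark~\ref{rem.signofcoefficients}, and finally the sign argument for $f\cdot f$, in which, as you note, only the coefficient equivalence and the alternating signs are used.
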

\begin{proof}
$(i)\Rightarrow(ii)$ This a consequence of the following general statement, with a straightforward proof: Assume that we have sequences $\mathbf{M}$, $\mathbf{N}$ (with $M_0=1=N_0$) such that $\mathbf{M}\cdot\mathbf{N}\approx\mathbf{L}$ and $\mathbf{L}$ satisfies \hyperlink{s-alg}{$(\on{alg})$} (and $L_0=1$). If $\mathbf{N}$ satisfies the condition of moderate growth, namely there exists $A>0$ such that $N_{j+k}\le A^{j+k}N_jN_k$ for all $j,k\in\NN_0$, then $\mathbf{M}$ satisfies \hyperlink{s-alg}{$(\on{alg})$} as well. In our case, it suffices to observe that \hyperlink{lc}{$(\text{lc})$} implies \hyperlink{s-alg}{$(\on{alg})$} and that the Gevrey-like sequences $\overline{\G}^{\beta}$ of every order $\beta\in\RR$ satisfy the moderate growth condition.


$(ii)\Rightarrow(iii)$ This is  Proposition~\ref{prop.stablepointwiseprop}.

$(i)\Rightarrow(iv)$ See Theorem~\ref{mainTthm1} and Remark~\ref{rem.signofcoefficients}.

$(iii)+(iv)\Rightarrow(ii)$ Let $f\in \mathcal{\widetilde{A}}^{u}_{\{\M\}} (S_\alpha)$ be as in $(iv)$. Since $\mathcal{\widetilde{A}}^{u}_{\{\M\}} (S_\alpha)$ is an algebra we have that $h= f\cdot f \in \mathcal{\widetilde{A}}^{u}_{\{\M\}} (S_\alpha)$.

By \eqref{cjestimation}, there exist $A_1>0$ and $d_1>0$ such that $|c^h_n|\le A_1d_1^nM_n$ for all $n\in\NN_0$. Using the auxiliary formula for the coefficients of a product \eqref{eq.stablepointwisepropequ1}, we have $c^h_n = \sum_{k=0}^n c^f_k c^f_{n-k}$. So, for any $j\in\{0,1,2,\dots,n\}$ we see that
$$|c^h_n|=|\sum_{k=0}^n c^f_k \cdot c^f_{n-k}| = |\sum_{k=0}^n  (-1)^k|c^f_k| \cdot (-1)^{n-k} |c^f_{n-k}|| = \sum_{k=0}^n |c^f_k| \cdot |c^f_{n-k}|  \ge |c^f_j| \cdot |c^f_{n-j}|.$$
Since $(|c^f_j|)_{j\in\NN_0}$ is equivalent to $\M$, there exist $A_2,d_2>0$ such that $|c^f_j|\geq A_2 d_2^j M_j$ and for all $j,k\in\NN_0$, by taking $n=j+k$, we deduce that 
 $$A_1d_1^{j+k}M_{j+k} \geq |c^h_{j+k}| \ge |c^f_j| \cdot |c^f_{k}|\ge A_2^2 d_2^{j+k} M_j M_k.$$
 Consequently, $\M$ satisfies \hyperlink{s-alg}{$(\on{alg})$}.
\end{proof}

\section{Stability under composition}\label{sect.StabilityCompos}

Let us first define a suitable concept of stability under composition for classes associated with the sequence $\M$.

\begin{definition}\label{def.UniformAsympComplexPlane}
Let $V$ be an open subset in the complex plane such that $0\in\overline{V}$, $f\colon V\to\CC$ be a holomorphic function on $V$, $\M=(M_j)_j\in\RR_{>0}^{\NN_0}$ be a sequence with $M_0=1$, and $h>0$. We say $f\in\widetilde{\mathcal{A}}_{\M,h}^{u}(V)$ if there exists a formal complex power series $\sum_{k=0}^{\infty}c_kz^k$  such that
\[
\sup_{z\in V\setminus\{0\},\,j\in\NN_{0}}\frac{|f(z)-\sum_{k=0}^{j-1}c_kz^k|}{h^{j}M_{j}|z|^j}<+\infty.
\]   
We set 
$$\widetilde{\mathcal{A}}_{\{\M\}}^{u}(V):=\bigcup_{h>0}\widetilde{\mathcal{A}}_{\M,h}^{u}(V),\qquad
\widetilde{\mathcal{A}}_{(\M)}^{u}(V):=\bigcap_{h>0}\widetilde{\mathcal{A}}_{\M,h}^{u}(V).$$
\end{definition}

Observe that, in case $V$ is a sector $S$ with vertex at 0 in the complex plane, this definition agrees with that given for $\widetilde{\mathcal{A}}_{[\M]}^{u}(S)$
in Section~\ref{sect.AsymptoticexpClass}. 

\begin{definition}\label{def.StabilityCompos}
Given a sequence $\M$ and a sector $S$, we say the class $\widetilde{\mathcal{A}}_{[\M]}^{u}(S)$ is stable under composition if for every $f\in\widetilde{\mathcal{A}}_{[\M]}^{u}(S)$ with $c^f_0=0$ (with the notation in Section~\ref{sec.formula}), for every subsector $T$ of $S$ and for every $g\in\widetilde{\mathcal{A}}_{[\M]}^{u}(f(T))$ one has that $g\circ (f|_T)\in\widetilde{\mathcal{A}}_{[\M]}^{u}(T)$.
\end{definition}

\begin{remark}
In the previous definition, the condition $c_0^f=0$ guarantees, thanks to the expression~\eqref{eq.coeff.asymp.expan.lim.remainder} for $n=0$, that $0\in\overline{f(T)}$ for every subsector $T$ of $S$, and so it makes sense to consider functions $g$ in $\widetilde{\mathcal{A}}_{[\M]}^{u}(f(T))$. 
\end{remark}

The next auxiliary result studies the growth of the remainders for the powers $f^j$, $j\in\NN$, for a function $f\in\widetilde{\mathcal{A}}_{\M,h}^{u}(S)$ with $c_0^f=0$. We note first that, if $n\in\NN_0$ and $n\le j$, the fact that $c_0^f=0$ implies that $r_{f^j}(z,n)=f^j(z)$ for $z\in S$. Since $f(z)=r_f(z,0)=r_f(z,1)$, we know that $|f(z)|\le \min\{A,AhM_1|z|\}$ for every $z\in S$ and some $A>0$, and so
\begin{equation}\label{eq.bound.remainders.fpowerj.n_at_most_j}
    |r_{f^j}(z,n)|\le A^j(\min\{1,hM_1|z|\})^j,\quad z\in S.
\end{equation}
We treat now the case $n>j\ge 1$.

\begin{lemma}\label{lemma.bounds.powers}
Let $\M=(M_j)_j\in\RR_{>0}^{\NN_0}$ be a sequence with $M_0=1$, $S$ be a sector and $f\in\widetilde{\mathcal{A}}_{\M,h}^{u}(S)$ with $c^f_0=0$, so that there exists $A>0$ such that
\begin{equation}\label{eq.bounds.fpower1}
    |r_f(z,n)|\le A h^n M_n |z|^n,\quad z\in S,\ n\in\NN_0.
\end{equation}
Then, for every $j,n\in\NN$ with $n>j$ one has
\begin{equation}\label{eq.bounds.fpowerj}
    |r_{f^j}(z,n)|\le A^j h^n \left(\sum_{\substack{m_1+\dots+m_j=n\\m_1\ge 1,\dots,m_j\ge 1}}M_{m_1}\cdot\ldots \cdot M_{m_j}\right) |z|^n,\quad z\in S.
\end{equation}
\end{lemma}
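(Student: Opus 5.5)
We argue by induction on $j\ge 1$, with the recursion \eqref{eq.remaindern.fpowerj} as the engine. For $j=1$ and $n>1$ the claim is \eqref{eq.bounds.fpower1} itself, since the only composition of $n$ into one positive part is $m_1=n$. So let $j\ge2$, suppose \eqref{eq.bounds.fpowerj} holds for $j-1$ and every $n'>j-1$, and take $n>j$. We will also need a bound for the borderline case $n'=j-1$. There $r_{f^{j-1}}(z,j-1)=f^{j-1}(z)$, and \eqref{eq.bounds.fpower1} with $n=1$ gives $|f(z)|\le AhM_1|z|$. Hence $|r_{f^{j-1}}(z,j-1)|\le A^{j-1}h^{j-1}M_1^{j-1}|z|^{j-1}$, which is exactly the right-hand side of \eqref{eq.bounds.fpowerj} with $j-1$ and $n'=j-1$, because the only composition is $m_1=\dots=m_{j-1}=1$. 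So the inductive bound holds for all $n'\ge j-1$.

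Next we bound the two pieces of \eqref{eq.remaindern.fpowerj}. For the sum, fix $1\le i\le n-j$. Then $n-i\ge j>j-1$, so the inductive bound applies. By \eqref{cjestimation}, or more precisely by the limit \eqref{eq.coeff.asymp.expan.lim.remainder} applied to \eqref{eq.bounds.fpower1}, we have $|c_i^f|\le Ah^iM_i$ with the same constant $A$. Combining these, the $i$-th term is at most
\[
A^{j}h^{n}M_i\Bigl(\sum_{m_2+\dots+m_j=n-i,\ m_l\ge1}M_{m_2}\cdots M_{m_j}\Bigr)|z|^n .
\]
These are precisely the compositions of $n$ into $j$ positive parts whose first part satisfies $m_1=i\le n-j$. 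For the last piece, \eqref{eq.bounds.fpower1} gives $|r_f(z,n-j+1)|\le Ah^{n-j+1}M_{n-j+1}|z|^{n-j+1}$, and $|r_f(z,1)|^{j-1}\le A^{j-1}h^{j-1}M_1^{j-1}|z|^{j-1}$. Their product is $A^jh^nM_{n-j+1}M_1^{j-1}|z|^n$, which corresponds to the single composition $(n-j+1,1,\dots,1)$. That is the unique composition with first part $m_1=n-j+1$.

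Adding the two bounds, the first-part values $m_1\in\{1,\dots,n-j\}$ and $m_1=n-j+1$ together exhaust every possible first part, because $m_1\le n-(j-1)$ always. Each composition is therefore counted exactly once, and we obtain \eqref{eq.bounds.fpowerj}. The main point of care is the bookkeeping at the borderline index $n-i=j$ and in the last term, together with making sure the constant in the coefficient bound is the same $A$. Both of these are handled above.
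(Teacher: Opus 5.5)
Your proof is correct and follows the paper's argument essentially step by step: induction on $j$ via the recursion \eqref{eq.remaindern.fpowerj}, with the $i$-th summand accounting for the compositions whose first part is $m_1=i\le n-j$, and the last term accounting for the single composition $(n-j+1,1,\dots,1)$. Your separate bound for the borderline case $n'=j-1$ is never actually used, since $n-i\ge j$ in every summand; your remark that $|c_i^f|\le Ah^iM_i$ holds with the same constant $A$, via \eqref{eq.coeff.asymp.expan.lim.remainder}, makes explicit a point the paper leaves tacit.
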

\begin{remark}
Observe that the estimates~\eqref{eq.bounds.fpowerj} are also valid when $n=j$, since they take the form
$$
|r_{f^j}(z,j)|\le A^jh^jM_1^j|z|^j,\quad z\in S,
$$
which was already obtained in~\eqref{eq.bound.remainders.fpowerj.n_at_most_j}.
\end{remark}
\begin{proof}
We proceed by induction on $j$. For $j=1$ it is clear that the estimates in~\eqref{eq.bounds.fpowerj} reduce to those in~\eqref{eq.bounds.fpower1} particularized for $n>1$. Suppose now that the estimates~\eqref{eq.bounds.fpowerj} hold true for some $j-1\ge 1$ and every $n>j-1$, and we treat the $j$-th power. 
Observe that, whenever $1\leq i \leq n-j$, one has $n-i\geq n-(n-j)>j-1$, and so we can apply the induction hypothesis in order to estimate $r_{f^{j-1}}(z,n-i)$ in the formula~\eqref{eq.remaindern.fpowerj}. Moreover, we can use~\eqref{cjestimation} and write
\begin{align*}
|r_{f^j}(z,n)|&\leq \sum_{i=1}^{n-j}Ah^iM_i|z|^iA^{j-1}h^{n-i}\left(\sum_{\substack{m_1+\dots+m_{j-1}=n-i\\ m_1\ge 1,\dots,m_{j-1}\ge 1}}M_{m_1}\cdot\ldots \cdot M_{m_{j-1}}\right)|z|^{n-i}\\
&+Ah^{n-j+1}M_{n-j+1}|z|^{n-j+1}(AhM_1|z|)^{j-1}\\
&=A^j h^n \sum_{i=1}^{n-j+1}M_i\left(\sum_{\substack{m_1+\dots+m_{j-1}=n-i\\m_1\ge 1,\dots,m_{j-1}\ge 1}}M_{m_1}\cdot\ldots \cdot M_{m_{j-1}}\right) |z|^n\\
&=A^j h^n \left(\sum_{\substack{m_1+\dots+m_j=n\\m_1\ge 1,\dots,m_j\ge 1}}M_{m_1}\cdot\ldots \cdot M_{m_j}\right) |z|^n,
	\end{align*}
as desired.
\end{proof}

We are ready for the following generalization of \cite[Thm. 6]{AubersonMennessier81}.

\begin{proposition}\label{prop.stablecomposition}
Let $\M=(M_j)_j\in\RR_{>0}^{\NN_0}$ be a sequence with $M_0=1$, and let $S$ be a sector. If $\M$ satisfies \hyperlink{s-FdB}{$(\on{FdB})$}, then the class $\mathcal{\widetilde{A}}^{u}_{[\M]}(S)$ is closed under composition. 
\end{proposition}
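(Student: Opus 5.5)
We estimate the remainder $r_\varphi(z,n)$ of $\varphi:=g\circ(f|_T)$ through formula~\eqref{eq.equ75.roumieu},
$$r_\varphi(z,n)=\sum_{j=1}^{n-1}c^g_j\,r_{f^j}(z,n)+r_g(f(z),n),$$
and bound every term by a constant times $H^nM_n|z|^n$. Fix $f\in\widetilde{\mathcal{A}}^u_{\M,h_1}(S)$ with $c^f_0=0$, a subsector $T\subseteq S$, and $g\in\widetilde{\mathcal{A}}^u_{\M,h_2}(f(T))$, for some (resp. every) $h_1,h_2>0$. The remainder bounds for $f$ hold a fortiori on $T$. Hence Lemma~\ref{lemma.bounds.powers} (together with the following remark for $n=j$) applies on $T$ with the constant $A$ from~\eqref{eq.bounds.fpower1}. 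The coefficient estimate~\eqref{cjestimation} holds for $g$ on $f(T)$ by the same limit argument, since $0\in\overline{f(T)}$. So $|c^g_j|\le B h_2^jM_j$ and $|r_g(w,n)|\le Bh_2^nM_n|w|^n$ for $w\in f(T)$.

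For the first sum we combine Lemma~\ref{lemma.bounds.powers} with \hyperlink{s-FdB}{$(\on{FdB})$}. Every product $M_j M_{m_1}\cdots M_{m_j}$ with $m_i\ge1$ and $\sum m_i=n$ is at most $Ch^nM_n$. The number of compositions of $n$ into $j$ positive parts is $\binom{n-1}{j-1}$. Therefore
$$\sum_{j=1}^{n-1}|c^g_j||r_{f^j}(z,n)|\le B\,C\,(h_1h)^n M_n|z|^n\sum_{j=1}^{n}\binom{n-1}{j-1}(Ah_2)^j\le BC\,Ah_2\,\bigl(h_1h(1+Ah_2)\bigr)^nM_n|z|^n.$$
For the last term we write $|r_g(f(z),n)|\le Bh_2^nM_n|f(z)|^n$ and $|f(z)|=|r_f(z,1)|\le Ah_1M_1|z|$. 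This gives $B(Ah_1h_2M_1)^nM_n|z|^n$, which is already of the right form, with no need for \hyperlink{s-FdB}{$(\on{FdB})$} here. Alternatively it fits the same scheme, since $M_nM_1^n$ is the FdB product with $\ell=n$ and all $j_i=1$. Adding the two bounds gives $|r_\varphi(z,n)|\le D H^nM_n|z|^n$ on $T$, with $H=\max\{h_1h(1+Ah_2),\,Ah_1h_2M_1\}$, which settles the Roumieu case.

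The main obstacle is the Beurling case, where $H$ must become arbitrarily small. The constant $h$ in \hyperlink{s-FdB}{$(\on{FdB})$} is fixed, and $A$ depends on $h_1$. We would first try the following. Given $\varepsilon>0$, fix $h_2=1$, which fixes $B$. Then choose $h_1$ small, and keep $A$ bounded independently of $h_1$. This works because $\|f\|_{\M,h_1,\overset{\sim}{u}}$ is nonincreasing in $h_1$: the $n=0$ term gives $\sup|f|$, and the terms with $n\ge1$ carry the factor $h_1^{-n}$. Hmm — actually smaller $h_1$ makes the norm larger, not smaller. To handle this we instead rescale. Use $|r_f(z,n)|\le A_{h_1}h_1^nM_n|z|^n$ and note that in the first sum the factor $A^j$ combines with $h_2^j$. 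Then choose $h_2$ small relative to $A_{h_1}$, after fixing $h_1$ small enough that $h_1h\cdot 2<\varepsilon$, so that $Ah_2\le1$. The last term also becomes small once $h_2\le\varepsilon/(A_{h_1}h_1M_1)$. Since $g$ belongs to every $\widetilde{\mathcal{A}}^u_{\M,h_2}(f(T))$, the corresponding constant $B=B_{h_2}$ only multiplies the bound and is harmless. This ordering of the choices is the delicate point, and we would check it carefully.
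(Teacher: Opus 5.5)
Your proof is correct and follows the paper's argument essentially step for step. It uses formula~\eqref{eq.equ75.roumieu}, Lemma~\ref{lemma.bounds.powers}, and \hyperlink{s-FdB}{$(\on{FdB})$} with the $\binom{n-1}{j-1}$ count of compositions; in the Beurling case you fix $h_1$ first and then take $h_2$ small relative to $A_{h_1}$, just as the paper does with $h_2=h_1/A_1$. The only cosmetic differences are that the paper absorbs $r_g(f(z),n)$ as the $j=n$ term of the sum (which you also note), and that you should delete your abandoned first attempt (keeping $A$ bounded as $h_1\to0$) from the final write-up.
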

\begin{proof}
Let $f\in\mathcal{\widetilde{A}}^{u}_{[\M]}(S)$ with $c_0^f=0$, $T$ be a subsector of $S$, and  $g\in\mathcal{\widetilde{A}}^{u}_{[\M]}(f(T))$. We have $f|_T\in\mathcal{\widetilde{A}}^{u}_{\M,h_1}(T)$ and $g\in\mathcal{\widetilde{A}}^{u}_{\M,h_2}(f(T))$ for some (resp. all) $h_i>0$, $i=1,2$, so that there exist $A_1=A_1(h_1)>0$ and $A_2=A_2(h_2)>0$ such that
\begin{equation*}
    |r_f(z,n)|\le A_1h_1^n M_n |z|^n,\quad z\in T,\ n\in\NN_0,
\end{equation*}
and
\begin{equation*}
    |r_g(f(z),n)|\le A_2 h_2^n M_n |f(z)|^n,\quad z\in T,\ n\in\NN_0.
\end{equation*}
Put $\varphi=g\circ f|_T$; according to the formula~\eqref{eq.equ75.roumieu}, and subsequently using Lemma~\ref{lemma.bounds.powers} and the fact that $f(z)=r_f(z,1)$, for every $n\in\NN_0$ and $z\in T$ we have
\begin{align*}
|r_{\varphi}(z,n)|&\le\sum_{j=1}^{n-1}|c^g_j||r_{f^j}(z,n)|+|r_g(f(z),n)|\\
&\le \sum_{j=1}^{n-1} A_2 h_2^j M_j \cdot A_1^j h_1^n 
\left(\sum_{\substack{m_1+\dots+m_j=n\\m_1\ge 1,\dots,m_j\ge 1}}M_{m_1}\cdot\ldots \cdot M_{m_j}\right) |z|^n
\\
&\quad + A_2 h_2^n M_n \left( A_1 h_1 M_1 |z| \right)^n\\
&=\sum_{j=1}^{n} A_2 h_2^j M_j \cdot A_1^j h_1^n 
\left(\sum_{\substack{m_1+\dots+m_j=n\\m_1\ge 1,\dots,m_j\ge 1}}M_{m_1}\cdot\ldots \cdot M_{m_j}\right) |z|^n.
\end{align*}
By assumption, $\M$ satisfies \hyperlink{s-FdB}{$(\on{FdB})$}, so there exist $C,B>0$ such that, for any $m_1,\dots,m_j\in\NN$ with $m_1+\dots+m_j=n$, one has 
$$
M_jM_{m_1}\cdot\ldots \cdot M_{m_j}\le CB^nM_n.
$$
Since the number of compositions of $n$ as the sum of $j$ positive integers is  $\binom{n-1}{j-1}$, we can write
\begin{align*}
|r_{\varphi}(z,n)|
&\le C A_2 (Bh_1)^n M_n |z|^n 
\sum_{j=1}^{n} \binom{n-1}{j-1} (A_1 h_2)^j\\
&\le C A_2 \left( Bh_1(1 + A_1 h_2) \right)^n M_n |z|^n .
\end{align*}
So, we deduce that $\varphi\in\mathcal{\widetilde{A}}^{u}_{\M,h_3}(T)$ for $h_3=Bh_1 (1 + A_1 h_2)$, and we are done in the Roumieu case. In the Beurling case, given an arbitrary $h_3>0$ one can take for $f$ the value $h_1>0$ such that $Bh_1(1+h_1)=h_3$, which will provide a value $A_1>0$ associated to $h_1$ for $f$; then, one can choose for $g$ the value $h_2=h_1/A_1>0$ and we conclude.
\end{proof}

Conversely and as before, whenever we have suitable characteristic functions in the Roumieu class, closure under composition implies condition \hyperlink{s-FdB}{$(\on{FdB})$}. 

\begin{theorem}\label{thm.stablecomposition}
Let $\M=(M_j)_j\in\RR_{>0}^{\NN_0}$ be a sequence with $M_0=1$ and $\alpha>0$. If $\alpha\leq 2$ we assume that $\overline{\G}^{2-\alpha}\M$ is equivalent to an \hyperlink{lc}{$(\text{lc})$} sequence, and if $\alpha>2$ we assume that there exists $\a'>\a$ such that $\overline{\G}^{2-\alpha'}\M$ is equivalent to an \hyperlink{lc}{$(\text{lc})$} sequence. Then the class $\mathcal{\widetilde{A}}^{u}_{\{\M\}} (S_\alpha)$ is closed under composition if and only if $\M$ satisfies  \hyperlink{s-FdB}{$(\on{FdB})$}. 
\end{theorem}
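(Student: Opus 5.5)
The sufficiency of \hyperlink{s-FdB}{$(\on{FdB})$} is exactly Proposition~\ref{prop.stablecomposition}, so I only need necessity. As in the proof of $(iii)+(iv)\Rightarrow(ii)$ in Theorem~\ref{thm.stablepointwiseprop}, I will use the function produced by Theorem~\ref{mainTthm1} together with Remark~\ref{rem.signofcoefficients}. Under the present hypotheses this gives $f\in\widetilde{\mathcal{A}}^u_{\{\M\}}(S_\alpha)$ with $f\sim^u_{\{\M\}}\sum_j c_jz^j$, $c_j=(-1)^j|c_j|$, and constants $A_2,d_2>0$ with $|c_j|\ge A_2d_2^jM_j$ for all $j$. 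In particular $c_1\neq 0$.

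\textbf{Inner and outer functions.} I take the inner function $u(z):=c_0-f(z)$. It lies in $\widetilde{\mathcal{A}}^u_{\{\M\}}(S_\alpha)$, has $c^u_0=0$, and $c^u_j=-c_j=(-1)^{j+1}|c_j|$ for $j\ge1$. I take the outer function $g:=f$.

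\textbf{Domain issue.} The definition of stability only requires $g\in\widetilde{\mathcal{A}}^u_{\{\M\}}(u(T))$ for some subsector $T$, and restricting $f$ to a subset $V\subset S_\alpha$ with $0\in\overline V$ keeps the same uniform bounds. So it suffices to find a (bounded) subsector $T$ with $u(T)\subset S_\alpha$. Since $c_1=-|c_1|<0$, we have
$$u(z)=|c_1|z\,(1+O(z)) \quad\text{as } z\to0$$
uniformly in $S_\alpha$, because $r_u(z,2)=O(|z|^2)$. Hence for $T=\{|\arg z|<\beta,\ 0<|z|<r\}$, with $\beta<\alpha\pi/2$ and $r$ small, we get $\arg u(z)\in(-\beta-\varepsilon,\beta+\varepsilon)\subset(-\alpha\pi/2,\alpha\pi/2)$ and $u(z)\ne0$. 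So $u(T)\subset S_\alpha$ and $g\in\widetilde{\mathcal{A}}^u_{\{\M\}}(u(T))$. This domain check is the step I expect to need the most care: the sign choice $u=c_0-f$ is what makes $u$ map near-positive directions to near-positive directions.

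\textbf{Sign bookkeeping.} By stability, $\varphi=g\circ u|_T\in\widetilde{\mathcal{A}}^u_{\{\M\}}(T)$, so by \eqref{cjestimation} on $T$ there are $A_1,d_1>0$ with $|c^\varphi_n|\le A_1d_1^nM_n$. By \eqref{equ71} and \eqref{equ74},
$$c^\varphi_n=\sum_{j=1}^n c_j\sum_{m_1+\dots+m_j=n}\prod_{i=1}^j(-1)^{m_i+1}|c_{m_i}|.$$
Each term has sign $(-1)^j(-1)^{n+j}=(-1)^n$, independent of $j$ and of the $m_i$. So there is no cancellation, and $|c^\varphi_n|$ dominates every single term:
$$A_1d_1^nM_n\ \ge\ |c_j|\,|c_{m_1}|\cdots|c_{m_j}|\ \ge\ A_2^{j+1}d_2^{\,j+n}M_jM_{m_1}\cdots M_{m_j}.$$

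\textbf{Conclusion.} Since $j\le n$, I write $A_2^{-(j+1)}\le A_2^{-1}\max\{1,A_2^{-1}\}^n$ and $d_2^{-j-n}\le\max\{1,d_2^{-1}\}^{2n}$. This gives
$$M_jM_{m_1}\cdots M_{m_j}\le Ch^nM_n$$
with $C,h$ independent of $n,j,m_i$, which is \hyperlink{s-FdB}{$(\on{FdB})$}. For $n=0$ the condition is vacuous, since $k\in\NN$.
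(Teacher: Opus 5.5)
Your proof is essentially the paper's: you use the same inner function $c_0-f$ and the same outer function $f$, and you make the same observation that every term of $c^\varphi_n$ has sign $(-1)^n$, so $|c^\varphi_n|$ dominates each product $|c_j||c_{m_1}|\cdots|c_{m_j}|$; your check that $u(T)$ stays in the sector, read off directly from the uniform bound $|u(z)-|c_1|z|=|r_u(z,2)|\le Ah^2M_2|z|^2$ on all of $S_\alpha$, is a slightly more direct substitute for the paper's argument via $\lim f_0'(z)=-c_1^f$ on a proper subsector followed by integration along $[0,z]$. The one point to make explicit is the case $\alpha>2$: there you must choose the half-opening $\beta$ with $\beta+\varepsilon<\pi$, so that $u(T)$ is an open subset of $\CC\setminus(-\infty,0]$, which is identified with $S_2\subset S_\alpha$, because Definition~\ref{def.UniformAsympComplexPlane} requires $u(T)\subset\CC$ and $f$ is not single-valued on a wrapped image --- this is exactly why the paper treats $\alpha>2$ separately.
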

 \begin{proof} By Proposition~\ref{prop.stablecomposition}, we only need to show necessity. 
 
By Theorem~\ref{mainTthm1}, there exists a characteristic function $f\in \mathcal{\widetilde{A}}^{u}_{\{\M\}} (S_\alpha)$ in the strong sense, i.e., $f\sim^u_{\{\M\}} \sum_{j=0}^\infty c^f_j z^j$ and $(|c^f_j|)_{j\in\NN_0}$ is equivalent to $\M$. Moreover, by Remark~\ref{rem.signofcoefficients} we know that $c_0^f>0$ and $c_1^f<0$. Let us consider the function $f_0$ defined in $S_\a$ by $f_0(z)=c_0^f-f(z)$. It is clear that $f_0\in \mathcal{\widetilde{A}}^{u}_{\{\M\}} (S_\alpha)$ and $f_0\sim^u_{\{\M\}} -\sum_{j=1}^\infty c^f_j z^j$ (in particular, $c_0^{f_0}=0$). 

In case $\alpha\le 2$, choose real numbers $\b$ and $\varepsilon$ such that $0<\varepsilon<\frac{\pi}{2}(\alpha-\b)<\frac{\pi}{2}$. Consider $\varepsilon_1:=-c_1^f\sin(\varepsilon)<|c_1^f|$.
Due to~\eqref{eq.limit.derivatives.coeff}, we have that
$$
\lim_{\substack{z\to 0\\z\in S_{\b}}}f_0'(z)=-c_1^f,
$$
and so there exists $r>0$ such that for every $z\in S_{\b,r}=\{w\in S_{\b}\colon |w|<r\}$ one has $|f_0'(z)+c_1^f|<\varepsilon_1$. Take $z\in S_{\b,r}$, then the segment $(0,z]$ is contained in $S_{\b,r}$ and so,
$$
|f_0(z)+c_1^fz|=\left|\int_{[0,z]}(f_0'(w)+c_1^f)\,dw\right|\le \varepsilon_1|z|.
$$
For such $z$ we deduce that
$$
|\arg(f_0(z))-\arg(z)|=|\arg(f_0(z))-\arg(-c_1^fz)|\le \arcsin\left(\frac{\varepsilon_1|z|}{-c_1^f|z|}\right)=\varepsilon.
$$
The choices of $\b$ and $\varepsilon$ guarantee that $|\arg(f_0(z))|<\frac{\a\pi}{2}$, and so $f_0(S_{\b,r})\subset S_{\a}$. Since $\mathcal{\widetilde{A}}^{u}_{\{\M\}} (S_\alpha)$ is closed under composition, we know that $\varphi:= f\circ f_0 \in \mathcal{\widetilde{A}}^{u}_{\{\M\}} (S_{\b,r})$ (where, for simplicity, we have written $f_0$ instead of $f_0|_{S_{\b,r}}$, and $f$ instead of $f|_{f_0(S_{\b,r})}$).

In particular, if $\varphi\sim^u_{\{\M\}}\sum_{n=0}^\infty c_n^{\varphi}z^n$, we know that there exist $A_1>0$ and $h_1>0$ such that 
\begin{equation}\label{eq.estimates.cn.composition}
|c^{\varphi}_n|\le A_1h_1^nM_n, \quad n\in\NN_0.     
\end{equation}
At the same time, the formulas~\eqref{equ71} and~\eqref{equ74} allow us to write $c^\varphi_0=c^f_0$ and, since $c_0^{f_0}=0$ and using again Remark~\ref{rem.signofcoefficients},
\begin{align*}
c^\varphi_n&=\sum_{j=1}^{n}c^f_jc^{f_0^j}_n=
\sum_{j=1}^{n}c^f_j\sum_{m_1+\dots+m_j=n}c^{f_0}_{m_1}\cdots c^{f_0}_{m_j}\\
&=\sum_{j=1}^{n}(-1)^j|c^f_j|\sum_{\substack{m_1+\dots+m_j=n\\m_1\ge 1,\dots,m_j\ge 1}}(-(-1)^{m_1}|c^{f}_{m_1}|)\cdots (-(-1)^{m_j}|c^{f}_{m_j}|)\\
&=(-1)^n\sum_{j=1}^{n}|c^f_j|\sum_{\substack{m_1+\dots+m_j=n\\m_1\ge 1,\dots,m_j\ge 1}}|c^{f}_{m_1}|\cdots |c^{f}_{m_j}|,\;\;\;n\in\NN. 
\end{align*}
As $(|c^f_j|)_{j\in\NN_0}$ is equivalent to $\M$, there exist $A_2,B>0$ such that $|c^f_j|\geq A_2 B^j M_j$ for all $j\in\NN_0$. Then,
\begin{align*}
|c^\varphi_n|&=\sum_{j=1}^{n}|c^f_j|\sum_{\substack{m_1+\dots+m_j=n\\m_1\ge 1,\dots,m_j\ge 1}}|c^{f}_{m_1}|\cdots |c^{f}_{m_j}|\\
&\ge \sum_{j=1}^{n}A_2 B^j M_j\sum_{\substack{m_1+\dots+m_j=n\\m_1\ge 1,\dots,m_j\ge 1}}A_2 B^{m_1}M_{m_1}\cdots A_2 B^{m_j} M_{m_j}\\
&=A_2 B^n\sum_{j=1}^{n} (A_2B)^jM_j\sum_{\substack{m_1+\dots+m_j=n\\m_1\ge 1,\dots,m_j\ge 1}}M_{m_1}\cdots M_{m_j}.
\end{align*}
In case $A_2B\ge 1$, we deduce that, for every $m_1,\dots,m_j\in\NN$ with $\sum_{i=1}^jm_i=n$, we have $|c_n^{\varphi}|\ge A_2B^nM_jM_{m_1}\dots M_{m_j}$, and we obtain the condition \hyperlink{s-FdB}{$(\on{FdB})$} when comparing with the inequalities~\eqref{eq.estimates.cn.composition}. If $A_2B<1$, we get
$|c_n^{\varphi}|\ge A_2(A_2B^2)^nM_jM_{m_1}\dots M_{m_j}$, and the conclusion follows similarly.

Finally, the case $\a>2$ is treated by considering the restriction of a charasteristic function $f$ in the class $\mathcal{\widetilde{A}}^{u}_{\{\M\}} (S_\alpha)$ to the sector $S_2$, which can be identified with $\CC\setminus(-\infty,0]$, and proceeding as before to obtain a proper subsector $S_{\b,r}$, with $\b\in(1,2)$ and $r>0$, so that the image of $S_{\b,r}$ under the function $c_0^f-f$ is contained in $S_2$. This allows us to reason as before with the function $f\circ(c_0^f-f)$ (after suitable restrictions), which belongs to the corresponding class by hypothesis, and we can deduce the  condition \hyperlink{s-FdB}{$(\on{FdB})$} again.
\end{proof}

\vskip.5cm
\noindent\textbf{Acknowledgements}: The first three authors are partially supported by the Spanish Ministry of Science and Innovation under the project PID2022-139631NB-I00. The research of the fourth author was funded in whole by the Austrian Science Fund (FWF) project 10.55776/PAT9445424.\par

\bibliographystyle{plain}

\begin{thebibliography}{10}


%
%
	
	\bibitem{AubersonMennessier81}
	G.~Auberson and G.~Mennessier, Some properties of {B}orel summable functions, J. Math. Phys. 22 (1981), 2472-2481.


	\bibitem{ultraholomstab}
	J.~Jim{\'e}nez-Garrido, I.~Miguel-Cantero, J.~Sanz and G.~Schindl, Stability properties of ultraholomorphic classes of {R}oumieu-type defined by weight matrices, Rev. Real Acad. Cienc. Exactas Fis. Nat. Ser. A-Mat. RACSAM 118, 85 (2024).
	
	
	\bibitem{sectorialextensions}
	J.~Jim{\'e}nez-Garrido, J.~Sanz and G.~Schindl, Sectorial extensions, via Laplace transforms, in ultraholomorphic
	classes defined by weight functions, Results Math. 74 (2019), no. 1, 27.
	
	\bibitem{sectorialextensions1}
	J.~Jim{\'e}nez-Garrido, J.~Sanz and G.~Schindl, Sectorial extensions for ultraholomorphic classes defined by weight	 functions, Math. Nachr. 293 (2020), no. 11, 2140--2174.
	
	
	\bibitem{Komatsu73}
	H.~Komatsu, Ultradistributions, I. Structure theorems and a characterization, J. Fac. Sci. Univ. Tokyo Sect. IA Math. 20 (1973), 25--105.

%

	

	\bibitem{MozoPhd}
	J.~Mozo-Fernández, Teoremas de división y de Malgrange-Sibuya para funciones con desarrollo asintótico fuerte en varias variables, Ph.D. thesis, University of Valladolid, 1996.

    \bibitem{Poschel}
    J. Pöschel, On the Siegel-Sternberg linearization theorem. J. Dyn. Diff. Equat. 33, 1399--1425 (2021). 
	
	\bibitem{compositionpaper}
	A.~Rainer and G.~Schindl, Composition in ultradifferentiable classes, Studia Math. 224 (2014), no. 2, 97--131.
	
	\bibitem{characterizationstabilitypaper}
	A.~Rainer and G.~Schindl, Equivalence of stability properties for ultradifferentiable function classes, Rev. R. Acad. Cienc. Exactas Fís. Nat. Ser. A Mat. RACSAM 110 (2016), no. 1, 17--32.
%
%
%
	\bibitem{salinas62}
	B.~Rodríguez-Salinas, Clases de funciones analíticas, clases semianalíticas y cuasianalíticas, Rev. R. Acad. Cienc. Exactas Fís. Quím. Nat. Zaragoza (2) 17 (1962), 5--75.
%
%
%
	
	
	
%
%
%
%
	
	\bibitem{thilliez}
	V.~Thilliez, On quasianalytic local rings, Expo. Math. 26 (2008), no. 1, 1--23.
\end{thebibliography}

%
%
%

\vskip.5cm
\noindent\textbf{Affiliations}:\\
\noindent Javier~Jim\'{e}nez-Garrido:\\
Departamento de Matem\'aticas, Estad{\'\i}stica y Computaci\'on\\
Universidad de Cantabria\\
Avda. de los Castros, s/n, 39005 Santander, Spain\\
Instituto de Investigaci\'on en Matem\'aticas IMUVA, Universidad de Va\-lla\-do\-lid\\
ORCID: 0000-0003-3579-486X\\
E-mail: jesusjavier.jimenez@unican.es\\

\vskip.1cm
\noindent
Ignacio Miguel-Cantero:\\
Departamento de \'Algebra, An\'alisis Matem\'atico, Geometr{\'\i}a y Topolog{\'\i}a\\
Universidad de Va\-lla\-do\-lid\\
Facultad de Ciencias, Paseo de Bel\'en 7, 47011 Valladolid, Spain.\\
Instituto de Investigaci\'on en Matem\'aticas IMUVA\\
ORCID: 0000-0001-5270-0971\\
E-mail: ignacio.miguel@uva.es\\

\vskip.1cm
\noindent Javier~Sanz:\\
Departamento de \'Algebra, An\'alisis Matem\'atico, Geometr{\'\i}a y Topolog{\'\i}a\\
Universidad de Va\-lla\-do\-lid\\
Facultad de Ciencias, Paseo de Bel\'en 7, 47011 Valladolid, Spain.\\
Instituto de Investigaci\'on en Matem\'aticas IMUVA\\
ORCID: 0000-0001-7338-4971\\
E-mail: javier.sanz.gil@uva.es\\

\vskip.1cm
\noindent Gerhard~Schindl:\\
Fakult\"at f\"ur Mathematik, Universit\"at Wien,
Oskar-Morgenstern-Platz~1, A-1090 Wien, Austria.\\
ORCID: 0000-0003-2192-9110\\
E-mail: gerhard.schindl@univie.ac.at
\end{document}